\newtheorem{thm}{Theorem}[section]
\newtheorem{cor}[thm]{Corollary}
\newtheorem{lem}[thm]{Lemma}
\newtheorem*{coj*}{Conjecture}
\title{\bf Concerning Some Properties of  Signed Graphs Associated With Specific Graphs}
\author{{\bf Y. Bagheri, A. R. Moghaddamfar} and {\bf F. Ramezani$^{1}$}\\[0.5cm]
\date{}
{\em In memory of Professor Michael Neumann.}}
\begin{document}
\newcommand{\f}{\frac}
\newcommand{\sta}{\stackrel}
\maketitle
\footnotetext[1]{\tt Corresponding author, email:
ramezani@kntu.ac.ir.}

\begin{abstract}
\noindent Two signed graphs are called switching isomorphic if one of them is isomorphic to a switching equivalent of the other. To determine the number of switching non-isomorphic signed graphs on a specific graph,
we will establish a method based on the action of its automorphism group.
As an application and computational results, we classify all the switching non-isomorphic signed graphs arising from the complete graph $K_5$ and the generalized Petersen graph ${\rm GP}(7,2)$.
Moreover, some results on the automorphism groups of the target signed graphs are obtained.
\end{abstract}

\section{Introduction}
All graphs considered in this paper are, simple and undirected. Let $\Gamma$ be a graph of order $n$ with vertex set $V_\Gamma=\{v_1, v_2, \ldots, v_n\}$ and edge set $E_\Gamma$.  For vertices $v_i, v_j\in V_\Gamma$, we will write $v_i\sim v_j$ if  $v_iv_j\in E_\Gamma$. The {\em adjacency matrix} ${\mathbf A}={\mathbf A}(\Gamma)$ of a graph $\Gamma$ of order $n$ is the symmetric $(0,1)$-matrix $[a_{ij}]_{n\times n}$ such that $a_{ij}=1$ if $v_i\sim v_j$, and $0$, otherwise.

A \textit{signed graph}  (or {\em sigraph}) is an ordered pair $\Sigma=(\Gamma, \sigma)$ consists of a simple graph $\Gamma=(V_\Gamma, E_\Gamma)$, referred to as its {\em underlying graph} of $\Sigma$, and  a {\em sign function} $\sigma: E_\Gamma \rightarrow \{+1, -1\}$. The concept of signed graphs is given by Harary \cite{Harary}.
Recently there are some papers which have studied signed graphs from different perspectives (see for instance \cite{Et-F, GHS, Naser, Qi, Soz, Yu, ZAS, ZAS2}).
We denote by ${\cal S}(\Gamma)$ the set of all signed graphs  with underlying graph $\Gamma$.
For a signed graph $\Sigma=(\Gamma, \sigma))$, the vertex set $V_\Sigma$ of $\Sigma$ coincide with the vertex set of its underlying graph, while the edge set $E_\Sigma$ is divided into two disjoint subsets $E_\Sigma^+$ and $E_\Sigma^-$ (defined by $\sigma$) that contain positive and negative edges, respectively. For a signed graph $\Sigma=(\Gamma,\sigma)$, by $\Sigma^+$ and $\Sigma^-$, we mean the following unsigned graphs:
 $$\Sigma^+=(V_\Gamma, E_{\Sigma^+}) \ \ \ \ \mbox{and} \ \ \ \  \Sigma^-=(V_\Gamma, E_{\Sigma^-}),$$
 where
$$E_{\Sigma^+}=\sigma^{-1}(+1) \ \ \ \ \mbox{and} \ \ \ \  E_{\Sigma^-}=\sigma^{-1}(-1),$$

Given a signed graph $\Sigma=(\Gamma, \sigma)$ with vertex set $V_{\Sigma}=V_\Gamma=\{v_1, v_2, \ldots, v_n\}$, the adjacency matrix of $\Sigma$ is defined as a square matrix ${\mathbf A}_\sigma={\mathbf A}(\Sigma)=[a_{ij}^\sigma]_{n\times n}$ with
$$a_{ij}^\sigma=\sigma(v_iv_j)a_{ij},$$
where $a_{ij}$ is the $(i, j)$-entry of the adjacency matrix of the underlying graph $\Gamma$,  or equivalently,
$$a_{ij}^\sigma=\left\{ \begin{array}{rl} 1, & \hbox{if $v_i\sim v_j$ and $\sigma(v_iv_j)=+1$,} \\
-1, & \hbox{if $v_i\sim v_j$ and $\sigma(v_iv_j)=-1$,} \\
0, & \hbox{if $v_i\nsim v_j$.}  \end{array}   \right. $$
A {\em switching function} is a function $\theta: V_\Gamma \rightarrow  \{+1,-1\}$. A signed graph $\Sigma$ is transformed by a switching function $\theta$ to a new signed graph $\Sigma^\theta = (\Gamma, \sigma^\theta)$ such that the underlying graph remains the same and the sign function $\sigma^\theta$ is defined on an edge $v_iv_j\in E_\Gamma$ by $$\sigma^\theta(v_iv_j)=\theta(v_i) \sigma(v_iv_j) \theta(v_j).$$ Two signed graphs $\Sigma_1, \Sigma_2\in {\cal S}(\Gamma)$
 are said to be {\em switching equivalent}, denoted by $\Sigma_1\sim \Sigma_2$, if there exists a switching function $\theta$ such that $\Sigma_2=\Sigma_1^\theta$, or there exists a diagonal matrix $${\mathbf D}^\theta= {\rm diag}(\theta(v_1), \theta(v_2), \ldots, \theta(v_n)),$$ such that $${\mathbf A}(\Sigma_2) = {\mathbf D}^\theta {\mathbf A}(\Sigma_1) {\mathbf D}^\theta,$$
 otherwise
they are called \textit{switching non-equivalent}.
Recall that {\em switching} of a signed graph at a vertex $v$ reverses the sign of each edge incident with $v$.
Hence, alternatively we might say that two signed graphs $\Sigma_1, \Sigma_2\in {\cal S}(\Gamma)$ are
switching equivalent if $\Sigma_2$ is obtained from a series of switchings of $\Sigma_1$.

It is clear that being `switching equivalent' defines an equivalence relation on ${\cal S}(\Gamma)$, and therefore it partitions the set ${\cal S}(\Gamma)$ into equivalence classes.
We denote the equivalence class containing the signed graph $\Sigma\in {\cal S} (\Gamma)$ by $[\Sigma]$. The set of all
equivalence classes will be denoted by $\Omega_s(\Gamma)$.

Two signed graphs $\Sigma_1=(\Gamma_1,\sigma_1)$ and $\Sigma_2=(\Gamma_2,\sigma_2)$
are \textit{isomorphic}, denoted by $\Sigma_1\cong \Sigma_2$, if there is a graph isomorphism from $\Gamma_1$ to $\Gamma_2$, which preserves the signs of
edges. Similarly, the signed graphs $\Sigma_1$ and $\Sigma_2$  are said to be \textit{switching isomorphic} if
$\Sigma_1$ is isomorphic to a signed graph which is switching equivalent to $\Sigma_2$, otherwise we
call them \textit{switching non-isomorphic}.

Given a graph $\Gamma$ with $n$ vertices and $m$ edges, there are $2^m$ ways of constructing a signed graph on $\Gamma$. When $\Gamma$ is connected,  we show that there are $2^{m-n+1}$ mutually switching non-equivalent  signed graphs in ${\cal S}(\Gamma)$. Zaslavsky in \cite{ZAS2} proved that there are only six different signed Petersen graphs, up to switching isomorphism. Motivated by his results, we will investigate  the same problem for other lists of graphs.
We also define an action of the automorphism group ${\rm Aut}(\Gamma)$ on the set $\Omega_s(\Gamma)$. Using some well-known results from Group Theory,  we count the number of switching non isomorphic signed graphs. The main idea is determining orbit size of each class $[\Gamma, \sigma]$. Classifying those signed graphs with a same underlying graph which are mutually switching non-isomorphic, significantly decrease the number of signed graphs for studying their properties. For the notation and definitions on group theoretical aspects, we refer the reader to \cite{Isaacs}.

We now introduce some further notation and definitions on Graph Theory. Two distinct edges $e_i$ and $e_j$ are said to be {\em adjacent} if and only if they have a common end vertex.
Two edges of a graph are {\em disjoint} if they do not share a common vertex.  Two graphs $\Gamma_1=(V_{\Gamma_1}, E_{\Gamma_1})$ and $\Gamma_2=(V_{\Gamma_2}, E_{\Gamma_2})$ are {\em isomorphic}, denoted by $\Gamma_1\cong \Gamma_2$, if there exists a bijection $\varphi: V_{\Gamma_1}\rightarrow V_{\Gamma_2}$ such that, for every pair of vertices $v_i, v_j\in V_{\Gamma_1}$, $v_iv_j\in E_{\Gamma_1}$ if and only if $\varphi(v_i)\varphi(v_j)\in E_{\Gamma_2}$.
An {\em automorphism} of a graph $\Gamma$ is a graph isomorphism between $\Gamma$ and itself.
For a graph $\Gamma$, we denote by ${\rm Aut}(\Gamma)$ the set of all automorphisms of $\Gamma$ which forms a group under the operation of composition, and we call this group the {\em automorphism group} of $\Gamma$.

The sequel of this paper is organized as follows: In Section 2 we provide some preparatory results.
The main results are
presented in Section 3. Finally, we find the number of signed graphs with the underlying graph $K_5$ and the generalized Petersen graph ${\rm GP}(7,2)$,
up to switching isomorphism.


\section{Auxiliary Results}
We start with the following classical result, which is proved in \cite[Proposition 3.1] {Naser}.
We give here an elementary proof of this result to make the paper
self-contained.
\begin{lem} \label{nds} {\rm (\cite[Proposition 3.1] {Naser})}
Let $\Gamma$ be a connected graph with $n$ vertices and $m$ edges. Then, there are $2^{m-n+1}$ mutually switching non-equivalent signed graphs with underlying graph $\Gamma$.
\end{lem}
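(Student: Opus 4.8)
The plan is to fix a spanning tree $T$ of $\Gamma$ (which exists since $\Gamma$ is connected) and to show that every switching-equivalence class in $\Omega_s(\Gamma)$ contains exactly one signed graph whose restriction to $T$ is all-positive. Since $T$ has $n-1$ edges, such a signing is determined by an arbitrary choice of sign on each of the remaining $m-(n-1)=m-n+1$ edges, which gives $2^{m-n+1}$ signings of this special form. If each class meets this set of ``canonical'' signings in exactly one element, then the number of classes equals $2^{m-n+1}$, as claimed.

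For existence, given an arbitrary signing $\sigma$, I would root $T$ at a vertex $r$, set $\theta(r)=+1$, and define $\theta$ on the remaining vertices by descending the tree: if $u$ is the parent of $v$ along the unique tree path from $r$, put $\theta(v)=\theta(u)\,\sigma(uv)$. A direct check then gives $\sigma^\theta(uv)=\theta(u)\sigma(uv)\theta(v)=\theta(u)^2\sigma(uv)^2=+1$ for every tree edge $uv$, so $\Sigma^\theta$ is canonical and switching-equivalent to $\Sigma$. Hence every class contains at least one canonical representative.

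For uniqueness, suppose two canonical signings $\sigma_1,\sigma_2$ are switching-equivalent, say $\sigma_2=\sigma_1^\theta$. For each tree edge $uv$ we then have $\theta(u)\theta(v)=\sigma_2(uv)\sigma_1(uv)=(+1)(+1)=+1$, so $\theta(u)=\theta(v)$ on every edge of $T$. Since $T$ is connected and spans $\Gamma$, the switching function $\theta$ is constant on all of $V_\Gamma$; a constant switching function leaves every sign unchanged, because $\theta(u)\theta(v)=1$ for each edge. Therefore $\sigma_2=\sigma_1$, so each class contains exactly one canonical signing, which completes the count.

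The step I expect to be the crux is the uniqueness argument, and in particular the appeal to connectedness: it is precisely the connectedness of $\Gamma$ (equivalently, of $T$) that forces a sign-preserving switching function to be globally constant and hence trivial. If $\Gamma$ had $c$ connected components one could only conclude that $\theta$ is constant on each component, the kernel of the switching action would have size $2^c$, and the count would become $2^{m-n+c}$, of which the stated formula is the case $c=1$. An equivalent way to organize the same argument, which I would mention as a remark, is cohomological: identify signings with $(\mathbb{Z}/2)^m$ and switching functions with $(\mathbb{Z}/2)^n$ acting through the coboundary map $\delta\colon\theta\mapsto(\,uv\mapsto\theta(u)\theta(v)\,)$; the classes are the cosets of $\operatorname{Im}\delta$, whose kernel consists of the constant functions (by connectedness) and so has dimension $1$, giving $|\operatorname{Im}\delta|=2^{n-1}$ and $2^m/2^{n-1}=2^{m-n+1}$ cosets.
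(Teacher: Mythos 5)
Your proof is correct, and it takes a genuinely different route from the paper's. The paper argues by double counting: it shows that the map $\theta \mapsto \Sigma^{\theta}$ from switching functions to members of the class $[\Sigma]$ is exactly two-to-one (if $\Sigma^{\theta_1}=\Sigma^{\theta_2}$ then $\theta_2=\pm\theta_1$, the sign discrepancy being propagated along paths using connectedness), so every switching class has exactly $2^{n-1}$ elements, and then divides $|{\cal S}(\Gamma)|=2^m$ by $2^{n-1}$. You instead build an explicit transversal: a spanning tree $T$ yields a canonical representative (all tree edges positive) in each class, exactly one per class, so the classes are in bijection with the $2^{m-n+1}$ signings of the non-tree edges. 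Connectedness enters both arguments at the analogous point: in the paper it forces two switching functions with the same effect to agree up to a global sign, while for you it forces a switching function carrying one canonical signing to another to be constant, hence trivial. In the cohomological language of your closing remark, the paper computes $|\mathrm{Im}\,\delta| = 2^n/|\ker\delta| = 2^{n-1}$ and counts cosets by division, whereas you exhibit a set of coset representatives. Your version buys something extra: it is constructive, giving a normal form for each class in which the free signs on the non-tree edges record the signs of the fundamental cycles, and it yields the $c$-component generalization (Corollary~\ref{cor-1}) immediately by replacing the spanning tree with a spanning forest, exactly as you note. The paper's version avoids the choice of a tree and shows directly that all classes have equal cardinality, which is the fact it actually reuses in later counting arguments.
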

\begin{proof} We apply double counting to the set ${\cal S}(\Gamma)$  containing all signed graphs on $\Gamma$. On the one hand,
 there are $2^m$ possible sign functions on $E_\Gamma$, and so $|{\cal S}(\Gamma)|=2^m$.

 On the other hand,  let ${\cal S}^\ast(\Gamma)$ denote the set of mutually switching non-equivalent signed graphs. Recall that, there are $2^n$ possible switching functions on $V_\Gamma$.
 Suppose that $\theta$ is a switching function on $V_\Gamma$ and $\Sigma=(\Gamma, \sigma)\in {\cal S}^\ast(\Gamma)$ is a signed graph. We first argue that $\Sigma^\theta=\Sigma^{-\theta}$. Indeed, we have
$$\sigma^{-\theta}(v_iv_j)=(-\theta(v_i)) \sigma(v_iv_j) (-\theta (v_j))=\theta(v_i) \sigma(v_iv_j) \theta(v_j)=\sigma^\theta (v_iv_j),$$  for every $v_iv_j\in E_\Gamma$, as desired.

Next, we claim that if $\theta_1\neq \theta_2$ are two arbitrary switching functions on  $V_\Gamma$
 such that  $\Sigma^{\theta_1} = \Sigma^{\theta_2}$, then $\theta_2=-\theta_1$.
 Indeed, since $\theta_1\neq \theta_2$, there exists $u\in V_\Gamma$  with $\theta_1(u)\neq \theta_2(u)$.
 Let $v$ be an arbitrary vertex in $V_\Gamma \setminus \{u\}$.
 Since $\Gamma$ is connected, there exists a path between $u$ and $v$ in $\Gamma$, say
 $$u=w_1, w_2, w_3, \ldots, w_{n-1}, w_n=v.$$
It now follows from the equality $\Sigma^{\theta_1} = \Sigma^{\theta_2}$ that for
every $i=1, 2, \ldots, n-1$, we have $$\sigma^{\theta_1}(w_iw_{i+1})=\sigma^{\theta_2}(w_iw_{i+1}),$$
or equivalently, $$\theta_1(w_i)\sigma(w_iw_{i+1})\theta_1(w_{i+1})=\theta_2(w_i)\sigma(w_iw_{i+1})\theta_2(w_{i+1}),$$
that is, $$\theta_1(w_i)\theta_1(w_{i+1})=\theta_2(w_i)\theta_2(w_{i+1}).$$
Since $\theta_1(w_1)\neq \theta_2(w_1)$,  one obtains $\theta_1(w_2)\neq \theta_2(w_2)$. Continuing with our argument for $i=2, 3, \ldots, n-1$, we finally conclude that $\theta_1(w_n)\neq \theta_2(w_n)$, that is $\theta_1(v)\neq \theta_2(v)$. Since $v$ was arbitrary, it follows that $\theta_2=-\theta_1$, as claimed.
Therefore, any signed graph $\Sigma\in {\cal S}^\ast(\Gamma)$  admits $2^{n-1}$ switching non-equivalent signed graphs, which implies that  $$|{\cal S}(\Gamma)|=|{\cal S}^\ast(\Gamma)|\times 2^{n-1}.$$

Finally, we conclude that  $2^m=|{\cal S}^\ast(\Gamma)|\times 2^{n-1}$, from which the lemma follows at once. \end{proof}

In what follows we will let $c$ denote the number of connected components of $\Gamma$. As an immediate consequence of Lemma \ref{nds} we have:

\begin{cor} \label{cor-1}
Let $\Gamma$ be a graph with $n$ vertices, $m$ edges and $c$ connected components. Then, there are $2^{m-n+c}$ mutually switching non-equivalent signed graphs with underlying graph $\Gamma$.
\end{cor}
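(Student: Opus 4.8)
The plan is to reduce the statement to the connected case already handled in Lemma~\ref{nds} by decomposing $\Gamma$ into its connected components. Write $\Gamma=\Gamma_1\cup\Gamma_2\cup\cdots\cup\Gamma_c$, where $\Gamma_1,\ldots,\Gamma_c$ are the connected components, and let $n_i=|V_{\Gamma_i}|$ and $m_i=|E_{\Gamma_i}|$, so that $\sum_{i=1}^c n_i=n$ and $\sum_{i=1}^c m_i=m$. Since Lemma~\ref{nds} applies to each $\Gamma_i$, there are exactly $2^{m_i-n_i+1}$ mutually switching non-equivalent signed graphs on $\Gamma_i$. The goal is to show that the count for $\Gamma$ is the product of these numbers, which equals $\prod_{i=1}^c 2^{m_i-n_i+1}=2^{\sum_i(m_i-n_i+1)}=2^{m-n+c}$.

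First I would observe that, because $\Gamma$ has no edges joining distinct components, a sign function $\sigma$ on $E_\Gamma$ is nothing other than a tuple $(\sigma_1,\ldots,\sigma_c)$ of sign functions $\sigma_i$ on the $E_{\Gamma_i}$; likewise every switching function $\theta$ on $V_\Gamma$ restricts to a switching function $\theta_i$ on each $V_{\Gamma_i}$, and conversely any choice of the $\theta_i$ assembles into a single $\theta$ on $V_\Gamma$. Because $\sigma^\theta(v_iv_j)=\theta(v_i)\sigma(v_iv_j)\theta(v_j)$ depends only on the endpoints of the edge $v_iv_j$, which lie in the same component, the switching operation respects this decomposition: the restriction of $\Sigma^\theta$ to $\Gamma_i$ is exactly $(\Gamma_i,\sigma_i)^{\theta_i}$.

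The key step is then to promote this to the level of equivalence classes, establishing a bijection between $\Omega_s(\Gamma)$ and the Cartesian product $\Omega_s(\Gamma_1)\times\cdots\times\Omega_s(\Gamma_c)$. In one direction, two signed graphs $\Sigma=(\Gamma,\sigma)$ and $\Sigma'=(\Gamma,\sigma')$ that are switching equivalent via some $\theta$ have, by the previous paragraph, componentwise switching-equivalent restrictions, so the map sending $[\Sigma]$ to the tuple of classes of its restrictions is well defined. Conversely, if the restrictions to each $\Gamma_i$ are switching equivalent through some $\theta_i$, then assembling the $\theta_i$ into a single $\theta$ on $V_\Gamma$ exhibits $\Sigma\sim\Sigma'$; this uses precisely that the $\theta_i$ can be chosen independently, which is legitimate since the components share no vertices. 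Hence the map is a bijection, and counting gives $|\Omega_s(\Gamma)|=\prod_{i=1}^c|\Omega_s(\Gamma_i)|=2^{m-n+c}$.

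I expect the main obstacle to be the clean formalization of this ``no interaction between components'' principle, specifically the converse direction, where one must check that independently chosen switchings on the components genuinely combine into a valid global switching without any residual effect on cross-component edges (of which there are none). Once that point is phrased carefully, the product formula and the arithmetic $\sum_i(m_i-n_i+1)=m-n+c$ follow immediately.
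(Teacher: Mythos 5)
Your proof is correct and follows essentially the same route as the paper: the paper's own proof is a one-line appeal to Lemma~\ref{nds} together with the decomposition of $\Gamma$ into connected components, and your argument simply fills in the details of that reduction (the componentwise bijection of switching classes and the arithmetic $\sum_i(m_i-n_i+1)=m-n+c$). No gaps; the independence of switchings across components, which you flag as the delicate point, holds exactly because there are no cross-component edges, as you observe.
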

\begin{proof}  This is immediate from Lemma \ref{nds} and the fact that  every graph is disjoint union of its connected components. \end{proof}

We make use of the following lemma, due to T. Zaslavsky see \cite{ZAS}, in our paper.
\begin{lem} {\rm (\cite[T. Zaslavsky]{ZAS})} \label{HA}
Two signed graphs $\Sigma_1$ and $\Sigma_2$ in ${\cal S}(\Gamma)$ are switching equivalent if and only if the symmetric difference of $E_{{\Sigma_1}^-}$ and $E_{{\Sigma_2}^-}$ is an edge cut of $\Gamma$.
\end{lem}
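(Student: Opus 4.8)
My plan is to reduce the biconditional to a single concrete identity about the set of edges whose sign is flipped, and then read off both directions from it. The bridge between switching functions and edge cuts that I would establish first is the elementary observation that, for a switching function $\theta$, an edge $v_iv_j$ changes sign under $\theta$ (that is, $\sigma^\theta(v_iv_j)\ne\sigma(v_iv_j)$) exactly when $\theta(v_i)\theta(v_j)=-1$, i.e. when precisely one endpoint lies in $S=\theta^{-1}(-1)$. Consequently the set of edges reversed by switching $\theta$ is exactly the edge cut $\delta(S)$ determined by the vertex subset $S=\theta^{-1}(-1)$. This identity is the whole engine of the proof; everything after it is bookkeeping with symmetric differences.

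For the forward implication I would assume $\Sigma_1\sim\Sigma_2$, say $\Sigma_2=\Sigma_1^\theta$. Since the underlying graph is common, an edge lies in exactly one of $E_{\Sigma_1^-}$, $E_{\Sigma_2^-}$ precisely when its sign differs between $\Sigma_1$ and $\Sigma_2$, i.e. precisely when it was flipped by $\theta$. By the identity above this flip-set is $\delta(\theta^{-1}(-1))$, so $E_{\Sigma_1^-}\triangle E_{\Sigma_2^-}=\delta(\theta^{-1}(-1))$ is an edge cut, as wanted.

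For the converse I would start from the hypothesis $E_{\Sigma_1^-}\triangle E_{\Sigma_2^-}=\delta(S)$ for some $S\subseteq V_\Gamma$, and manufacture the switching function by setting $\theta(v)=-1$ for $v\in S$ and $\theta(v)=+1$ otherwise, so that $S=\theta^{-1}(-1)$. Switching $\Sigma_1$ by this $\theta$ flips exactly the edges of $\delta(S)$, so the negative edges of $\Sigma_1^\theta$ form the set $E_{\Sigma_1^-}\triangle\delta(S)=E_{\Sigma_1^-}\triangle(E_{\Sigma_1^-}\triangle E_{\Sigma_2^-})=E_{\Sigma_2^-}$, using associativity of $\triangle$ and $A\triangle A=\emptyset$. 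As $\Sigma_1^\theta$ and $\Sigma_2$ then share both underlying graph and negative-edge set, they are equal, giving $\Sigma_1\sim\Sigma_2$.

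I expect the only real subtlety to be pinning down the meaning of \emph{edge cut} so that the statement is literally true: it must mean $\delta(S)$ for an \emph{arbitrary} vertex set $S$ (equivalently, an element of the cut space of $\Gamma$), not a minimal disconnecting set. Once that convention is fixed, the map $\theta\mapsto\theta^{-1}(-1)$ furnishes the exact correspondence between switching functions and such cuts---with the familiar two-to-one ambiguity $\theta\leftrightarrow-\theta$ already observed in the proof of Lemma~\ref{nds}---and the two directions become mirror images of each other. I anticipate no computational hurdle; the entire difficulty is concentrated in recognizing the flip-set-equals-cut identity.
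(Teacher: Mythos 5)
Your proof is correct and complete. Note, however, that the paper itself offers no proof of this lemma at all: it is stated purely as a citation to Zaslavsky's paper \cite{ZAS}, so there is no internal argument to compare yours against. What you have written is the standard proof of this fact, and it supplies exactly what the paper leaves to the reference. The crux is your flip-set identity --- that the edges reversed by a switching $\theta$ are precisely $\delta(S)$ with $S=\theta^{-1}(-1)$ --- and both directions do reduce to symmetric-difference bookkeeping once that is in place: the forward direction reads off $E_{\Sigma_1^-}\triangle E_{\Sigma_2^-}=\delta(\theta^{-1}(-1))$, and the converse recovers $\theta$ from $S$ and uses $E_{\Sigma_1^-}\triangle(E_{\Sigma_1^-}\triangle E_{\Sigma_2^-})=E_{\Sigma_2^-}$. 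Your closing caveat is also the right one to flag: the statement is only literally true if ``edge cut'' means $\delta(S)$ for an arbitrary (possibly empty or full) vertex subset $S$, i.e.\ an element of the cut space, since switching equivalent graphs can have empty symmetric difference; the paper never pins this convention down (and in the proof of its Theorem 3.1 it tacitly uses the nondegenerate case, where $\delta(S)$ induces a complete bipartite graph $K_{n_1,n_2}$), so making it explicit is a genuine improvement rather than pedantry.
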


The \textit{generalized Petersen graph} ${\rm GP}(n, k)$, for $n\geqslant 3$ and $1\leqslant k\leqslant \lfloor (n-1)/2\rfloor$, is a cubic graph on the vertex set $V_{{\rm GP}(n, k)}=\mathbb{Z}_2\times\mathbb{Z}_n$,
for which the edge set is defined as follows: $$E_{{\rm GP}(n, k)}=\{(0, j)(0, j+1), \  (0, j)(1, j), \ (1, j)(1, j+k) \ | \  j=0, 1, \ldots, n-1\},$$ where all sums are taken modulo $n$.
These graphs were introduced by Coxeter \cite{Coxeter} and named by Watkins \cite{Watkins}.
For instance, the generalized Petersen graph ${\rm GP}(7,2)$ is depicted in Figure 1.
We usually call vertices $(0, 0), (0, 1), \ldots, (0, n-1)$ {\em outer vertices} and $(1, 0), (1, 1), \ldots, (1, n-1)$ {\em inner vertices}. Note that the outer and inner vertices are arranged on an outer circle and an inner circle, respectively.

\begin{center}
\tikzset{
    every node/.style={
        circle,
        draw,
        solid,
        fill=black!100,
        inner sep=2pt,
        minimum width=5pt
    }
}
\begin{tikzpicture}[thick,scale=0.7,shorten >=1pt, style/.style={draw,shape=circle}]
    \draw (0,3) node {} -- (0,1.5) node {};
    \draw (2.3,1.9) node {} -- (1.1,0.9)  node {};
    \draw (2.9,-0.6) node {} -- (1.4,-0.3)  node {};
    \draw (1.3,-2.7) node {} -- (0.6,-1.3)  node {};
    \draw (-2.3,1.9) node {} -- (-1.1,0.9) node {};
    \draw (-2.9,-0.6) node {} -- (-1.4,-0.3) node {};
    \draw (-1.3,-2.7) node {} -- (-0.6,-1.3)  node {};

   \draw (0,3) node {} -- (2.3,1.9)  node {};
   \draw (2.3,1.9) node {} -- (2.9,-0.6)  node {};
   \draw (2.9,-0.6) node {} -- (1.3,-2.7)  node {};
   \draw (1.3,-2.7) node {} -- (-1.3,-2.7)  node {};
   \draw (-2.3,1.9) node {} -- (-2.9,-0.6)  node {};
   \draw (-2.9,-0.6) node {} -- (-1.3,-2.7)  node {};
   \draw (-2.3,1.9) node {} -- (0,3) node {};

   \draw (0,1.5) node {} -- (-1.4,-0.3)  node {};
   \draw (-1.4,-0.3) node {} -- (0.6,-1.3)  node {};
   \draw (0.6,-1.3) node {} -- (1.1,0.9)  node {};
   \draw (1.1,0.9) node {} -- (-1.1,0.9)  node {};
   \draw (-1.1,0.9) node {} -- (-0.6,-1.3)  node {};
   \draw (-0.6,-1.3) node {} -- (1.4,-0.3)  node {};
   \draw (1.4,-0.3) node {} -- (0,1.5)  node {};
   \end{tikzpicture}
$$\textbf{Fig. 1. } \mbox{The generalized Petersen graph}  \ {\rm GP}(7,2)$$
\end{center}

We put $A(n, k)={\rm Aut}({\rm GP}(n,k))$. It is clear that $A(n, k)$ contains two automorphisms, $\alpha$ (the {\em rotation}) and $\beta$ (the {\em reflection}), defined by
$$\alpha: (i, j) \mapsto (i, j+1) \ \  \mbox{and} \  \beta: (i, j)\mapsto (i,-j), \ \ \mbox{ for} \ (i, j)\in \mathbb{Z}_2\times\mathbb{Z}_n.$$ Let $\gamma$ be the permutation of vertices defined by
$\gamma: (0, j)\mapsto (1, kj)$ and $\gamma: (1, j)\mapsto (0, kj)$.
Again, it is not difficult to see that $\gamma$ is an automorphism of ${\rm GP}(n, k)$ exactly when $k^2\equiv \pm1\pmod{n}$. The following theorem  (which is taken from \cite{FGW}), determines the  automorphism group of a generalized Petersen graph.
\begin{thm}\label{aut}  {\rm (see \cite{FGW})} Let $n$ and $k$ be positive integers and $(n, k)$ is not one of $(4, 1)$, $(5, 2)$, $(8, 3)$, $(10, 2)$, $(10, 3)$, $(12, 5)$ or $(24, 5)$. Then, the following statements hold:
 \begin{itemize}
 \item[\rm (a)] if $k^2\equiv 1\pmod{n}$, then we have  $$A(n,k) =\langle\alpha, \beta, \gamma \ | \ \alpha^n=\beta^2=\gamma^2=1,\alpha\beta=\beta\alpha^{-1},\alpha\gamma=\gamma\alpha^k,\beta\gamma=\gamma\beta\rangle.$$

 \item[\rm (b)] if $k^2\equiv -1\pmod{n}$, then we have $$A(n,k) =\langle\alpha,\beta,\gamma \ | \  \alpha^n=\beta^2=\gamma^2=1,\alpha\beta=\beta\alpha^{-1},\alpha\gamma=\gamma\alpha^k,\beta\gamma=\gamma\beta\rangle.$$

 \item[\rm (c)] if $k^2\not\equiv \pm1\pmod{n}$, then we have $A(n,k)=\langle\alpha,\beta \ | \ \alpha^n=\beta^2=1,\alpha\beta=\beta\alpha^{-1}\rangle$.
 \end{itemize}
\end{thm}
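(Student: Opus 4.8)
The plan is to prove the two inclusions separately: first that $A(n,k)$ contains the group presented on the right-hand side, and then that it contains nothing more. For the routine direction I would verify directly that $\alpha$ and $\beta$ carry edges to edges of the same type, checking the three families $(0,j)(0,j+1)$, $(0,j)(1,j)$ and $(1,j)(1,j+k)$ in turn; the relations $\alpha^n=\beta^2=1$ and $\alpha\beta=\beta\alpha^{-1}$ are then immediate, so $\langle\alpha,\beta\rangle$ is dihedral of order $2n$. When $k^2\equiv\pm1\pmod n$, the same edge-by-edge check shows $\gamma$ is an automorphism, and the hypothesis on $k$ is exactly what is needed to confirm that $\gamma$ sends inner edges to outer edges and vice versa. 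One then reads off $\gamma^2=1$, $\alpha\gamma=\gamma\alpha^k$ and $\beta\gamma=\gamma\beta$, and checks $\gamma\notin\langle\alpha,\beta\rangle$, so that $\langle\alpha,\beta,\gamma\rangle$ has order $4n$. This gives the lower bounds $|A(n,k)|\geq 2n$ and $|A(n,k)|\geq 4n$ respectively.

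The substance of the theorem is the reverse inequality, and my strategy would rest on a single structural lemma: apart from the seven excluded pairs, the $n$ spokes $(0,j)(1,j)$ form a distinguished perfect matching that every automorphism preserves setwise. To prove this I would attach to each edge a local graph-theoretic invariant---most naturally the number of shortest cycles passing through it, or a count of cycles of some fixed small length---and show that this invariant separates spokes from the outer and inner edges. Granting the lemma, any automorphism $\phi$ permutes the spokes; since the spoke $(0,j)(1,j)$ is labelled by $j$, this induces a permutation of the index set $\{0,1,\ldots,n-1\}$, and $\phi$ either preserves the two vertex classes (outer and inner) or interchanges them. If the classes are preserved, then $\phi$ restricts on the outer $n$-cycle to a symmetry of $C_n$, hence agrees there with some element of $\langle\alpha,\beta\rangle$; after multiplying by that element we obtain an automorphism fixing every outer vertex, which then fixes every spoke and hence every inner vertex, and so is the identity. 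If instead $\phi$ interchanges the classes, the outer $n$-cycle and the inner $k$-step circulant must be isomorphic, which forces the divisibility condition $k^2\equiv\pm1\pmod n$ under which $\gamma$ exists; in that case $\phi\gamma$ preserves the classes and the previous step applies. This yields the matching upper bounds $|A(n,k)|\leq 2n$ and $\leq 4n$.

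The main obstacle is unquestionably the structural lemma. The invariant distinguishing spokes from cycle-edges has to be computed through a genuine analysis of the girth and the short-cycle structure of ${\rm GP}(n,k)$, and it is precisely this analysis that breaks down for the pairs $(4,1)$, $(5,2)$, $(8,3)$, $(10,2)$, $(10,3)$, $(12,5)$ and $(24,5)$: in each of these the graph acquires extra short cycles that render a cycle-edge locally indistinguishable from a spoke, producing automorphisms outside $\langle\alpha,\beta,\gamma\rangle$ (for $(5,2)$, the Petersen graph, the group balloons to order $120$). I would therefore expect the proof to split into a uniform argument valid for all sufficiently large $n$ together with a finite, case-by-case verification handling the small values of $n$ where sporadic coincidences can occur. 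Isolating exactly which coincidences force exceptions, and certifying that no others arise, is the delicate heart of the result.
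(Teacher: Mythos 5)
The paper does not prove this theorem at all---it is imported verbatim as a known result of Frucht, Graver and Watkins \cite{FGW}---so there is no internal argument to compare yours against; your proposal must stand or fall on its own. In spirit it does track the strategy of \cite{FGW}: the easy inclusion by checking $\alpha$, $\beta$ (and $\gamma$) on the three edge orbits, and the hard inclusion by showing that every automorphism preserves the set of spokes and then reducing to the known subgroup. But as written it is an outline, not a proof. The entire mathematical content is concentrated in your ``structural lemma'' (spoke-invariance outside the seven exceptional pairs), and you neither fix the invariant concretely nor compute it; you explicitly defer ``the delicate heart of the result.'' In \cite{FGW} that heart is a genuine computation---counting the girth cycles and the $8$-cycles through edges of each of the three orbits as a function of $n$ and $k$---and it is precisely this computation that both establishes spoke-invariance and produces the exact list of seven exceptions. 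Without it, what you have proved is only the lower bound $|A(n,k)|\geqslant 2n$ (resp.\ $4n$), which is the trivial half.

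Two concrete slips in the parts you did sketch. First, in the class-swapping case you claim that isomorphism of the outer $n$-cycle with the inner circulant ``forces $k^2\equiv\pm1\pmod{n}$''; by itself it forces only $\gcd(n,k)=1$. The congruence comes from compatibility across the spokes: if the induced index map $\pi$ satisfies $\pi(j+1)=\pi(j)\pm k$ (outer edges go to inner edges) and $\pi(j+k)=\pi(j)\pm 1$ (inner edges go to outer edges), then combining the two gives $k^2\equiv\pm1\pmod{n}$. Second, your plan to ``read off $\gamma^2=1$'' fails in case (b): when $k^2\equiv-1\pmod{n}$ one computes $\gamma^2(i,j)=(i,k^2j)=(i,-j)=\beta(i,j)$, so $\gamma$ has order $4$ with $\gamma^2=\beta$, and likewise $\alpha\gamma=\gamma\alpha^{-k}$ rather than $\gamma\alpha^{k}$. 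This is really a defect of the theorem as transcribed in the paper, which repeats the case-(a) presentation verbatim in case (b), whereas \cite{FGW} give a different presentation when $k^2\equiv-1$; but it means the verification you propose, had you actually carried it out, would contradict the statement you are trying to prove, and your claimed order count $4n$ for $\langle\alpha,\beta,\gamma\rangle$ in case (b) needs the relation $\gamma^2=\beta$ to come out right.
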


\subsection{The action}

 As is customary,  we denote by ${\rm Aut}(\Gamma)$  the group of all automorphisms of a graph $\Gamma$.
 An automorphism of a signed graph $\Sigma=(\Gamma, \sigma)$ is an automorphism of $\Gamma$  that
 preserves edge signs. The group of all automorphisms of a signed graph $\Sigma$ is denoted by ${\rm Aut}(\Sigma)$.

 We have the following easy observation (see Lemma 8.1 \cite{ZAS2}):
 $${\rm Aut}(\Sigma)={\rm Aut}(\Gamma)\cap {\rm Aut}(\Sigma^+)={\rm Aut}(\Gamma)\cap {\rm Aut}(\Sigma^-)={\rm Aut}(\Sigma^+) \cap {\rm Aut}(\Sigma^-).$$

 We now define an {\em action} of the automorphism group ${\rm Aut}(\Gamma)$ on the set $\Omega_s(\Gamma)$ by setting   $$[\Gamma, \sigma]^\varphi=[\Gamma, \sigma^\varphi],$$ where $$\sigma^\varphi(v_iv_j)=\sigma(\varphi^{-1}(v_i)\varphi^{-1}(v_j)).$$
We need to verify that this action is well defined. For that we prove the image of any two switching equivalent signed graphs, are also switching equivalent. If $(\Gamma, \sigma_1)$ and $(\Gamma, \sigma_2)$
are switching equivalent, then there exists a switching function $\theta$ such that $(\Gamma, \sigma_2)=(\Gamma, \sigma_1)^\theta=(\Gamma, \sigma_1^\theta)$, where as before $$\sigma_1^\theta(v_iv_j)=\theta(v_i) \sigma_1(v_iv_j) \theta(v_j),$$  for every $v_iv_j\in E_\Gamma$. We now consider the switching function $\theta \varphi^{-1}: V_\Gamma \rightarrow \{+1, -1\}$. We claim that
\begin{equation}\label{eee-3} (\Gamma, \sigma_1^\varphi)^{\theta\varphi^{-1}}=(\Gamma, \sigma_2^\varphi),\end{equation}
which means that the signed graphs $(\Gamma, \sigma_1^\varphi)$ and $(\Gamma, \sigma_2^\varphi)$ are switching equivalent and we are done. To prove (\ref{eee-3}), it suffices to show that
$$(\Gamma, (\sigma_1^\varphi)^{\theta\varphi^{-1}})=(\Gamma, \sigma_2^\varphi),$$
or equivalently,
$$ (\sigma_1^\varphi)^{\theta\varphi^{-1}}=\sigma_2^\varphi.$$
In fact,  for each $v_iv_j\in E_\Gamma$, we have
$$
\begin{array}{lll} (\sigma_1^\varphi)^{\theta\varphi^{-1}} (v_iv_j) & = & \theta\varphi^{-1} (v_i)  \sigma_1^\varphi (v_iv_j) \theta\varphi^{-1}(v_j)\\[0.1cm]
& = & \theta\varphi^{-1} (v_i)  \sigma_1 (\varphi^{-1}(v_i)\varphi^{-1}(v_j)) \theta\varphi^{-1}(v_j)\\[0.1cm]
& = & \sigma_1^\theta (\varphi^{-1}(v_i)\varphi^{-1}(v_j))\\[0.1cm]
& = & \sigma_2 (\varphi^{-1}(v_i)\varphi^{-1}(v_j))\\[0.1cm]
& = & \sigma_2^\varphi (v_iv_j),\\
\end{array}
$$ as required.

\begin{lem} \label{orbits} Let $\Gamma$ be a simple graph and let ${\rm Aut}(\Gamma)$ act on $\Omega_s(\Gamma)$ as above. Two signed graphs $(\Gamma, \sigma_1)$ and $(\Gamma, \sigma_2)$ are switching isomorphic if and only if $[\Gamma, \sigma_1]$ and $[\Gamma, \sigma_2]
$ belong to the same orbit. In particular, the number of switching non-isomorphic signed graphs is equal to the number of orbits of this group action.
\end{lem}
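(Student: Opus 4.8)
The plan is to reduce the whole statement to a single bridging observation that relates signed-graph isomorphism (for the fixed underlying graph $\Gamma$) to the action $\sigma \mapsto \sigma^\varphi$ defined above. Since both signed graphs share the underlying graph $\Gamma$, a signed-graph isomorphism between $(\Gamma, \sigma)$ and $(\Gamma, \tau)$ is precisely a graph automorphism $\varphi \in {\rm Aut}(\Gamma)$ that preserves signs, i.e. $\sigma(v_iv_j) = \tau(\varphi(v_i)\varphi(v_j))$ for every edge $v_iv_j \in E_\Gamma$. First I would show that this sign-preservation condition is equivalent to $\tau = \sigma^\varphi$. Indeed, using $\sigma^\varphi(\varphi(v_i)\varphi(v_j)) = \sigma(\varphi^{-1}(\varphi(v_i))\varphi^{-1}(\varphi(v_j))) = \sigma(v_iv_j)$ and the fact that $\varphi$ permutes the edge set, the condition becomes $\tau(e) = \sigma^\varphi(e)$ on every edge $e$. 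This yields
$$(\Gamma, \sigma) \cong (\Gamma, \tau) \iff \tau = \sigma^\varphi \ \text{ for some } \ \varphi \in {\rm Aut}(\Gamma).$$
Getting the placement of $\varphi$ versus $\varphi^{-1}$ right here is the one genuinely delicate point; it is where a sign or inverse error would propagate through the rest of the argument, so I would verify it edge-by-edge.

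With the observation in hand, both implications become short. For the forward direction, assume $(\Gamma, \sigma_1)$ and $(\Gamma, \sigma_2)$ are switching isomorphic. By definition there is a signed graph $(\Gamma, \sigma')$ with $(\Gamma, \sigma_1) \cong (\Gamma, \sigma')$ and $(\Gamma, \sigma') \sim (\Gamma, \sigma_2)$. The observation gives $\sigma' = \sigma_1^\varphi$ for some $\varphi \in {\rm Aut}(\Gamma)$, while switching equivalence gives $[\Gamma, \sigma'] = [\Gamma, \sigma_2]$. Combining these with the definition of the action, $[\Gamma, \sigma_1]^\varphi = [\Gamma, \sigma_1^\varphi] = [\Gamma, \sigma'] = [\Gamma, \sigma_2]$, so the two classes lie in a common orbit. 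For the converse, if $[\Gamma, \sigma_1]$ and $[\Gamma, \sigma_2]$ share an orbit, choose $\varphi$ with $[\Gamma, \sigma_1^\varphi] = [\Gamma, \sigma_1]^\varphi = [\Gamma, \sigma_2]$; then $(\Gamma, \sigma_1^\varphi) \sim (\Gamma, \sigma_2)$, while the observation yields $(\Gamma, \sigma_1) \cong (\Gamma, \sigma_1^\varphi)$. Taking $\sigma' = \sigma_1^\varphi$ exhibits $(\Gamma, \sigma_1)$ as isomorphic to a signed graph switching equivalent to $(\Gamma, \sigma_2)$, i.e. the two are switching isomorphic. Since the action on $\Omega_s(\Gamma)$ was already verified to be well defined above, these manipulations of class representatives are legitimate.

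For the final sentence I would argue that the orbits of the action are exactly the switching-isomorphism classes. Switching isomorphism is an equivalence relation on ${\cal S}(\Gamma)$, and any two switching-equivalent signed graphs are trivially switching isomorphic (via the identity automorphism), so each switching-isomorphism class is a union of elements of $\Omega_s(\Gamma)$. The equivalence just proved says that two classes $[\Gamma, \sigma_1], [\Gamma, \sigma_2] \in \Omega_s(\Gamma)$ lie in the same orbit precisely when their signed graphs are switching isomorphic; hence each orbit is exactly the set of $\Omega_s(\Gamma)$-classes comprising one switching-isomorphism class. Consequently the orbits are in bijection with the switching-isomorphism classes, and the number of switching non-isomorphic signed graphs equals the number of orbits. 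I expect no obstacle here beyond stating the bookkeeping cleanly: the only real content of the lemma is the bridging observation and keeping the inverse straight in the action.
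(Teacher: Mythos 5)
Your proposal is correct and follows essentially the same route as the paper: both hinge on the observation that, for a fixed underlying graph $\Gamma$, a sign-preserving isomorphism $(\Gamma,\sigma)\cong(\Gamma,\tau)$ is exactly an automorphism $\varphi\in{\rm Aut}(\Gamma)$ with $\tau=\sigma^\varphi$, and then both directions follow by combining this with the well-definedness of the action. Your version is in fact slightly tidier in the converse, since showing $(\Gamma,\sigma_1)\cong(\Gamma,\sigma_1^\varphi)\sim(\Gamma,\sigma_2)$ matches the paper's definition of switching isomorphism literally, whereas the paper exhibits $(\Gamma,\sigma_1)\sim(\Gamma,\sigma_2^\varphi)\cong(\Gamma,\sigma_2)$ and implicitly uses the symmetry of the notion.
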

\begin{proof} Let $(\Gamma, \sigma_1)$ and $(\Gamma, \sigma_2)$ be two switching isomorphic graphs. By the definition, we may assume that $(\Gamma, \sigma_1)$ is isomorphic to $(\Gamma, \sigma_3)$ which is a switching equivalent to $(\Gamma, \sigma_2)$. Hence, there exists $\varphi\in {\rm Aut}(\Gamma)$, which preserves the sign of edges in $(\Gamma, \sigma_1)$, that is if $v_iv_j\in E_\Gamma$, then $$\sigma_3(v_iv_j)=\sigma_1(\varphi^{-1}(v_i)\varphi^{-1}(v_j)).$$
This shows that $\sigma_3=\sigma_1^\varphi$, and so
$$[\Gamma, \sigma_1]^\varphi=[\Gamma, \sigma_1^\varphi]=[\Gamma, \sigma_3]=[\Gamma, \sigma_2], $$
which means that  $[\Gamma, \sigma_1]$ and $[\Gamma, \sigma_2]$ are in the same orbit.

Conversely, if $[\Gamma, \sigma_1]$ and $[\Gamma, \sigma_2]$ both lie in the same orbit of  $\Omega_s(\Gamma)$ under ${\rm Aut}(\Gamma)$, then
there exists $\varphi\in {\rm Aut}(\Gamma)$ such that $[\Gamma, \sigma_1]=[\Gamma, \sigma_2]^\varphi$,
or equivalently, $[\Gamma, \sigma_1]=[\Gamma, \sigma_2^\varphi]$.
This means that $(\Gamma, \sigma_1)$ and $(\Gamma, \sigma_2^\varphi)$ are switching equivalent, and since $(\Gamma, \sigma_2^\varphi)\cong (\Gamma, \sigma_2)$ ($\varphi$ is a corresponding isomorphism), it follows that $(\Gamma, \sigma_1)$ and $(\Gamma, \sigma_2)$ are switching isomorphic. This completes the proof. \end{proof}

\begin{lem} \label{isonegative}  Let $\Sigma=(\Gamma, \sigma)$ be a signed graph and $\varphi\in {\rm Aut}(\Gamma)$.  Then ${\Sigma^\varphi}^-$ and $\Sigma^-$ are isomorphic.
\end{lem}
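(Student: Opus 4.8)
The plan is to show that the automorphism $\varphi$ itself serves as an isomorphism from $\Sigma^-$ to ${\Sigma^\varphi}^-$. Both graphs share the common vertex set $V_\Gamma$, and $\varphi$ is a bijection of $V_\Gamma$, so the only thing requiring verification is that $\varphi$ carries the negative edge set $E_{\Sigma^-}$ bijectively onto the negative edge set of $\Sigma^\varphi$. Recall that $v_iv_j\in E_{\Sigma^-}$ precisely when $\sigma(v_iv_j)=-1$, and that $\varphi(v_i)\varphi(v_j)$ is a negative edge of $\Sigma^\varphi$ precisely when $\sigma^\varphi(\varphi(v_i)\varphi(v_j))=-1$.

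First I would record the single computation that drives the whole argument. Using the defining formula $\sigma^\varphi(v_iv_j)=\sigma(\varphi^{-1}(v_i)\varphi^{-1}(v_j))$ for the action of $\varphi$ on signs, one gets
$$\sigma^\varphi(\varphi(v_i)\varphi(v_j))=\sigma\big(\varphi^{-1}(\varphi(v_i))\,\varphi^{-1}(\varphi(v_j))\big)=\sigma(v_iv_j).$$
Hence $\sigma^\varphi(\varphi(v_i)\varphi(v_j))=-1$ if and only if $\sigma(v_iv_j)=-1$, which means $v_iv_j\in E_{\Sigma^-}$ if and only if $\varphi(v_i)\varphi(v_j)\in E_{(\Sigma^\varphi)^-}$. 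Thus $\varphi$ maps negative edges to negative edges, and since $\varphi$ is a bijection this correspondence is onto as well, giving a graph isomorphism from $\Sigma^-$ to ${\Sigma^\varphi}^-$.

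I do not anticipate any real obstacle here. Because $\varphi\in{\rm Aut}(\Gamma)$ is already a permutation of the shared vertex set that preserves adjacency in $\Gamma$, no separate check that $\varphi$ respects the underlying edge relation is required; the entire content of the lemma reduces to the one-line sign identity above. The only point worth stating carefully is the direction of the substitution $\varphi^{-1}(\varphi(v))=v$, which is exactly what makes the negative-edge membership translate cleanly between the two signed graphs.
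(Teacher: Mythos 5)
Your proposal is correct and follows essentially the same route as the paper: both exhibit $\varphi$ itself as the isomorphism and reduce everything to the sign identity $\sigma^\varphi(\varphi(v_i)\varphi(v_j))=\sigma(v_iv_j)$, which translates negative-edge membership between $\Sigma^-$ and ${\Sigma^\varphi}^-$. The only difference is cosmetic: you write out the substitution $\varphi^{-1}(\varphi(v))=v$ explicitly, whereas the paper asserts the identity directly from the definition of $\sigma^\varphi$.
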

\begin{proof} We claim that $\varphi:V_\Gamma\rightarrow V_\Gamma$ is a graph isomorphism between ${\Sigma^\varphi}^-$ and $\Sigma^-$. Since $\varphi$ is a bijective mapping, it suffices to show that $v_iv_j\in E_{\Sigma^-}$ if and only if $\varphi(v_i)\varphi(v_j)\in E_{{\Sigma^\varphi}^-}$. Indeed, by definition of $\sigma^\varphi$, we have $\sigma^\varphi(\varphi(v_i)\varphi(v_j))=\sigma(v_iv_j)$, and hence $\sigma(v_iv_j)=-1$ if and only if
$\sigma^\varphi(\varphi(v_i)\varphi(v_j))=-1$,  in other words, $v_iv_j\in E_{\Sigma^-}$ if and only if $\varphi(v_i)\varphi(v_j)\in E_{{\Sigma^\varphi}^-}$, as required. \end{proof}


\section{The Main Results}
In the next result, we will determine a lower bound for the number of mutually switching non-isomorphic signed graphs on $n$ vertices with a complete underlying graph. We need to introduce some terminology. Let $\Sigma$ be a signed graph with vertex set $V_\Sigma=\{v_1, v_2, \ldots, v_n\}$. We denote by  $d^+_\Sigma(v_i)$ (resp. $d^-_\Sigma(v_i)$)  the number of positive (resp. negative) edges incident with $v_i$, and denote by $\psi (n, \Delta)$ the number of non-isomorphic graphs $\Gamma$ on $n$ vertices with $\Delta(\Gamma)\leqslant \Delta$.

\begin{thm}\label{lower} The number of mutually switching non-isomorphic signed graphs with a complete underlying graph on $n\geqslant 4$ vertices is at least $\psi(n, \lfloor \frac{n}{4}\rfloor-1)$.
\end{thm}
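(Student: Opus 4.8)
The plan is to encode a signed complete graph by its negative subgraph and then use a degree count to force non-isomorphic negative subgraphs to give switching non-isomorphic signed graphs. Since ${\rm Aut}(K_n)=S_n$ and a signed complete graph $\Sigma=(K_n,\sigma)$ is completely determined by its negative part $\Sigma^-$, which may be any graph on the vertex set $V$, I would begin by choosing one representative graph $G$ from each of the $\psi(n,\lfloor n/4\rfloor-1)$ isomorphism classes of graphs on $n$ vertices with $\Delta(G)\leqslant\lfloor n/4\rfloor-1$. For each such $G$ let $\Sigma_G$ be the signed complete graph whose negative edges are exactly $E(G)$. The goal is to prove that these $\psi(n,\lfloor n/4\rfloor-1)$ signed graphs are pairwise switching non-isomorphic, which immediately yields the claimed lower bound.

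Next I would translate switching isomorphism of $\Sigma_{G_1}$ and $\Sigma_{G_2}$ into a statement about negative subgraphs. If $\Sigma_{G_1}$ and $\Sigma_{G_2}$ are switching isomorphic, then there is a permutation $\varphi\in{\rm Aut}(K_n)$ with $\Sigma_{G_1}^\varphi$ switching equivalent to $\Sigma_{G_2}$. By Lemma \ref{isonegative}, $(\Sigma_{G_1}^\varphi)^-$ has edge set $\varphi(E(G_1))$ and is isomorphic to $G_1$, so in particular it has the same maximum degree as $G_1$. By Zaslavsky's Lemma \ref{HA}, the symmetric difference $H:=\varphi(E(G_1))\triangle E(G_2)$ is an edge cut of $K_n$, that is $H={\rm cut}(S,\bar S)$ for some $S\subseteq V$, where the empty cut ($S\in\{\emptyset,V\}$) is permitted since equal signed graphs have empty symmetric difference.

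The crux is a degree comparison. On one hand, since $H$ is the symmetric difference of two graphs each of maximum degree at most $\lfloor n/4\rfloor-1$, every vertex satisfies $\deg_H(v)\leqslant 2(\lfloor n/4\rfloor-1)=2\lfloor n/4\rfloor-2$, and using $2\lfloor n/4\rfloor-2\leqslant n/2-2<n/2\leqslant\lceil n/2\rceil$ this gives $\deg_H(v)<\lceil n/2\rceil$. On the other hand, if the cut were nontrivial ($1\leqslant|S|\leqslant n-1$), then taking $S$ to be the smaller side, each vertex of $S$ is joined in $K_n$ to all $n-|S|\geqslant\lceil n/2\rceil$ vertices of $\bar S$, so it has cut-degree at least $\lceil n/2\rceil$ in $H$, a contradiction. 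Hence the cut is empty, forcing $\varphi(E(G_1))=E(G_2)$; that is, $\varphi$ is a graph isomorphism of $G_1$ onto $G_2$, so $G_1\cong G_2$. Since the representatives were chosen from distinct isomorphism classes, the signed graphs $\Sigma_G$ are pairwise switching non-isomorphic, and the lower bound follows.

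I expect the main obstacle to be the degree inequality itself: one must verify $2\lfloor n/4\rfloor-2<\lceil n/2\rceil$ for all $n\geqslant 4$ and, just as importantly, be careful that Lemma \ref{HA} admits the empty edge cut, so that a vanishing symmetric difference is correctly read off as a genuine graph isomorphism rather than discarded. The threshold $\lfloor n/4\rfloor-1$ is precisely what is needed to push the doubled maximum degree of the negative subgraphs strictly below the unavoidable maximum degree $\lceil n/2\rceil$ of any nontrivial cut of $K_n$, and this is where the constant in the statement originates.
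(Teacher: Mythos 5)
Your proposal is correct and takes essentially the same route as the paper: the same family of signed graphs whose negative parts are representatives of the bounded-degree isomorphism classes, the same appeal to Zaslavsky's Lemma \ref{HA}, and the same degree-counting contradiction resting on the fact that every nontrivial edge cut of $K_n$ has a vertex of cut-degree at least $\lceil n/2\rceil$ while the negative parts have maximum degree at most $\lfloor n/4\rfloor-1$. The only cosmetic difference is that you bound the degrees of the symmetric difference directly (handling the empty cut by concluding $G_1\cong G_2$, which contradicts the choice of distinct representatives), whereas the paper reaches the same contradiction by switching across the cut and reading off an over-large negative degree; the underlying count is identical.
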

\begin{proof}  Let $n\geqslant 4$ be an integer and let ${\cal{A}}=\{\Gamma_1, \Gamma_2, \ldots, \Gamma_l\}$ be the set of all mutually non-isomorphic graphs on the vertex set $\{1, 2, \ldots, n\}$.
Clearly, every $\Gamma_i$ is a subgraph of $K_n$. It is obvious that two signed graphs $\Sigma_1$ and $\Sigma_2$ in ${\cal S}(K_n)$ are isomorphic if and only if the induced subgraphs $\Sigma_1^-$ and $\Sigma_2^-$ are isomorphic. For $i=1, 2, \ldots, l$, we define a function
 $\sigma_i: K_n\rightarrow \{-1, +1\}$ by
 $$\sigma_i(e)=\left\{\begin{array}{ll} -1 & \mbox{if} \ e\in E_{\Gamma_i},\\[0.1cm] +1 & \mbox{otherwise}.\end{array} \right. $$
 We claim the signed graphs $\{(K_n,\sigma_i) \ | \ i=1, 2, \ldots, l\}$ are mutually switching non-isomorphic.
 To prove the claim, we must show that for each $i\neq j$ of $\{1, 2, \ldots, l\}$, the signed graphs $(K_n, \sigma_i)$ and $(K_n, \sigma_j)$ cannot be switching isomorphic. First of all, note that the functions $\sigma_i$ have been chosen so that the signed graphs $(K_n, \sigma_i)$ and $(K_n, \sigma_j)$ cannot be isomorphic. We now prove that $(K_n, \sigma_j)$ cannot be isomorphic to any switching equivalent pair of $(K_n, \sigma_i)$.

 We make a few observations before going on to prove anything. Notice that any edge cut in $K_n$ induces a complete bipartite graph $K_{n_1, n_2}$ where $n=n_1+n_2$.   On the other hand, for each vertex $v$ in the graph $\Gamma_j$, $j=1, 2, \ldots, l$, we have
 \begin{equation}\label{e1} d^-_{(K_n, \sigma_j)}(v)=\deg_{\Gamma_j}(v)\leqslant \left \lfloor \frac{n}{4}\right\rfloor-1.\end{equation}
 Now we return to the proof of the claim. By way of contradiction we assume that the signed graph $(K_n, \sigma_j)$ is isomorphic to a signed graph $(K_n, \sigma)$ which is switching equivalent to $(K_n, \sigma_i)$.
Since $(K_n, \sigma)\sim (K_n, \sigma_i)$, by the definition there exists a switching function $\theta$ such that  $(K_n, \sigma)=(K_n, \sigma_i)^\theta$. Let $[S, \overline{S}]$ be a edge cut in $K_n$, which induces the complete bipartite graph $K_{n_1, n_2}$ where $n_1=|S|$ and $n_2=|\overline{S}|$. We may assume without loss of generality that $n_1\leqslant \lfloor\frac{n}{2}\rfloor\leqslant n_2$, hence, in the graph $K_{n_1, n_2}$, the degree of each vertex in $S$ is more than or equal to $\lfloor\frac{n}{2}\rfloor$. By Lemma \ref{HA} the symmetric difference of the edge sets $E_{(K_n,\sigma)^{-1}}, E_{(K_n,\sigma_i)^{-1}}$ induce an edge cut which is considered to be $K_{n_1,n_2}$. Let $\Sigma_i=(K_{n_1, n_2}, \hat{\sigma})$, where $\hat{\sigma}=\sigma_i|_{E(K_{n_1, n_2})}$. Now, it follows for every vertex $v$ in $S$ that
  $$d^+_{\Sigma_i}(v)=n_2-d^-_{\Sigma_i}(v)\geqslant n_2-d^-_{ (K_n, \sigma_i)  }(v)\geqslant n_2-\left \lfloor \frac{n}{4}\right\rfloor+1\geqslant \left\lfloor\frac{n}{2} \right\rfloor-\left \lfloor \frac{n}{4}\right\rfloor+1> \left \lfloor \frac{n}{4}\right\rfloor.$$
Since switching of a signed graph at a vertex set $S$ reverses the sign of each edge in $[S, \overline{S}]$,
we obtain
$$d^-_{\Sigma_i^\theta}(v)> \left \lfloor \frac{n}{4}\right\rfloor.$$
 On the other hand, singed graphs $(K_n, \sigma_j)$ and $(K_n, \sigma)$ are isomorphic, which implies that
  $$ d^-_{(K_n, \sigma_j)}(v)=d^-_{(K_n, \sigma)}(v)=d^-_{(K_n, \sigma_i)^\theta}(v)\geqslant d^-_{\Sigma_i^\theta}(v)> \left \lfloor \frac{n}{4}\right\rfloor,$$
which contradicts (\ref{e1}) and completes the proof.
\end{proof}

In what follows, we focus our attention on the generalized Petersen graphs $\Lambda={\rm GP}(p, k)$ where $p\geqslant 7$ is a prime and  $k^2\not\equiv \pm 1\pmod{p}$. It follows by Theorem \ref{aut}  that $${\rm Aut} (\Lambda)=A(p, k)=\langle\alpha,\beta \ | \ \alpha^p=\beta^2=1,\alpha\beta=\beta\alpha^{-1}\rangle,$$
which is a dihedral group of order $2p$. In group-theoretic terms, $A(p, k)$ is a Frobenius group with kernel ${\Bbb Z}_p$ and  complement ${\Bbb Z}_2$, hence $A(p, k)\cong {\Bbb Z}_p\rtimes {\Bbb Z}_2$.  Here we shall be interested only in deriving elementary properties
of automorphism groups of signed graphs on ${\rm GP}(p, k)$. To do this, we introduce a little more notation.

If $\Psi=(\Lambda, \sigma)$ is a signed graph on $\Lambda$, then for each $\emptyset \neq F\subseteq E_\Lambda$, define
$$\sigma(F):=\{\sigma(e) \ | \ e\in F\}\subseteq \{+1, -1\}.$$
Put $$\begin{array}{lll} E_1 &  \!\!\! :=  &  \!\!\!  \{(0, j)(0, j+1) \ | \ j=0, 1, \ldots, p-1\},\\[0.2cm]
E_2 &  \!\!\!  := &  \!\!\!  \{(1, j)(1, j+k) \ | \ j=0, 1, \ldots, p-1\},\\[0.2cm]
E_3 &  \!\!\!  := &  \!\!\!  \{(0, j)(1, j) \ | \ j=0, 1, \ldots, p-1\}.\\[0.2cm]
\end{array}$$

We make a preliminary observation:  $\langle \alpha\rangle$ acts transitively on $E_i$ for each $i$, $1\leqslant i\leqslant 3$.  For instance, suppose that $e_i=(0, i)(1, i+k)$ and $e_j=(0, j)(1, j+k)$ are two arbitrary elements of $E_3$ such that $j\geqslant i$. We now consider the rotation $\alpha^{j-i}$. It follows by the definition of $\alpha$ that
$$\alpha^{j-i}(0, i)=(0, j-i+i)=(0, j) \ \ \ \mbox{and} \ \ \ \alpha^{j-i}(1, i+k)=(1, j-i+i+k)=(1, j+k),$$
and so $\alpha^{j-i}(e_i)=e_j$. With this preliminary observation, we can now prove the following result.

\begin{thm}\label{auto-p}  With the above notation, if  $|\sigma(E_1)|= |\sigma(E_2)|= |\sigma(E_3)|=1$, then ${\rm Aut}(\Psi)=A(p, k)$, otherwise ${\rm Aut}(\Psi)$
contains at most two elements.
\end{thm}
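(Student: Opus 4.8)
The plan is to work entirely inside the dihedral group ${\rm Aut}(\Lambda) = A(p,k)$ of order $2p$, using that ${\rm Aut}(\Psi) \leqslant A(p,k)$ is a subgroup, and that since $p$ is prime the only nontrivial proper subgroups of $A(p,k)$ are the cyclic rotation group $\langle\alpha\rangle$ of order $p$ and the $p$ reflection subgroups of order $2$. The argument then splits along the dichotomy in the statement.

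First I would treat the case $|\sigma(E_1)|=|\sigma(E_2)|=|\sigma(E_3)|=1$, where each edge class carries a single sign. Here the goal is to show every $\varphi\in A(p,k)$ already preserves signs, giving ${\rm Aut}(\Psi)=A(p,k)$. Since $A(p,k)=\langle\alpha,\beta\rangle$ and the automorphisms that stabilize the partition $\{E_1,E_2,E_3\}$ setwise form a subgroup, it suffices to check the two generators. For $\alpha$ this is the preliminary observation already recorded. For $\beta:(i,j)\mapsto(i,-j)$ I would verify directly that $\beta$ maps each $E_i$ into itself; the only mildly nontrivial check is $E_2$, where $(1,j)(1,j+k)\mapsto(1,-j)(1,-j-k)=(1,m)(1,m+k)$ with $m=-j-k$, again an element of $E_2$. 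Once each $\varphi$ fixes every $E_i$ setwise, monochromaticity of the classes forces $\sigma(\varphi(e))=\sigma(e)$ for all $e$, so $\varphi\in{\rm Aut}(\Psi)$.

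Next I would handle the case where some class is not monochromatic, say $|\sigma(E_{i_0})|=2$; the target is $|{\rm Aut}(\Psi)|\leqslant 2$. The key step is to rule out every nontrivial rotation. If $\alpha^t\in{\rm Aut}(\Psi)$ with $t\neq 0$, then $\gcd(t,p)=1$ because $p$ is prime, so $\langle\alpha^t\rangle=\langle\alpha\rangle$ and hence $\langle\alpha\rangle\leqslant{\rm Aut}(\Psi)$. But $\langle\alpha\rangle$ acts transitively on $E_{i_0}$ and, being inside ${\rm Aut}(\Psi)$, preserves signs; transitivity together with sign preservation forces all edges of $E_{i_0}$ to share a single sign, contradicting $|\sigma(E_{i_0})|=2$. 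Thus ${\rm Aut}(\Psi)\cap\langle\alpha\rangle=\{1\}$.

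Finally I would conclude from the dihedral structure. If ${\rm Aut}(\Psi)$ contained two distinct reflections $\alpha^a\beta$ and $\alpha^b\beta$, then using $\beta\alpha^b=\alpha^{-b}\beta$ their product is $\alpha^a\beta\alpha^b\beta=\alpha^{a-b}$, a nontrivial rotation lying in ${\rm Aut}(\Psi)\cap\langle\alpha\rangle$, which is impossible. Hence ${\rm Aut}(\Psi)$ contains no nontrivial rotation and at most one reflection, so $|{\rm Aut}(\Psi)|\leqslant 2$. I expect the main obstacle to be the bookkeeping in the first case, namely verifying that every automorphism (not merely $\alpha$) stabilizes the three edge classes; this rests precisely on the hypothesis $k^2\not\equiv\pm 1\pmod{p}$, which guarantees by Theorem \ref{aut} that $A(p,k)$ is exactly $\langle\alpha,\beta\rangle$, with no inner/outer–swapping automorphism $\gamma$ to disturb the partition.
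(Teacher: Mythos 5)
Your proposal is correct and follows essentially the same route as the paper: the same dichotomy, the same key use of the transitivity of $\langle\alpha\rangle$ on each class $E_i$ to exclude nontrivial rotations in the non-monochromatic case, and the same generator check ($\alpha$ and the reflections preserve each $E_i$) in the monochromatic case. The only cosmetic difference is that you conclude $|{\rm Aut}(\Psi)|\leqslant 2$ by the explicit dihedral identity $\alpha^a\beta\,\alpha^b\beta=\alpha^{a-b}$, where the paper leaves this step implicit after invoking Lagrange to restrict the subgroup order to $1$, $2$, $p$, or $2p$; your version actually makes that final deduction more explicit than the original.
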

\begin{proof} We already know that ${\rm Aut}(\Psi)=A(p, k)\cap {\rm Aut}(\Psi^-)$. Observe, first of all, that
$$A(p, k)=\{1, \alpha, \alpha^2, \ldots, \alpha^{p-1}, \beta, \alpha\beta, \alpha^2\beta, \ldots, \alpha^{p-1}\beta\},$$
all rotations $ \alpha, \alpha^2, \ldots, \alpha^{p-1}$ have order $p$ and all reflections $ \beta, \alpha\beta,  \ldots, \alpha^{p-1}\beta$ have order $2$.  Obviously, ${\rm Aut}(\Psi)$ as a subgroup of $A(p, k)$ has order $1$, $2$, $p$ or $2p$. Since $\langle \alpha\rangle$ acts transitively on $E_i$ for each $i$, none of these rotations is contained in $ {\rm Aut}(\Psi^-)$, except when $|\sigma(E_1)|= |\sigma(E_2)|= |\sigma(E_3)|=1$. As a matter of fact, in the case when $|\sigma(E_1)|= |\sigma(E_2)|= |\sigma(E_3)|=1$, since the sets $E_1$,  $E_2$ and $E_3$ are $\langle \alpha\rangle$-invariant,  $\langle\alpha \rangle\subseteq {\rm Aut}(\Psi^-)$. Similarly, each of these sets
is invariant under elements of the form $\alpha^i\beta$, $i=0, 1, \ldots, p-1$. Again it follows that if $|\sigma(E_1)|= |\sigma(E_2)|= |\sigma(E_3)|=1$, then ${\rm Aut}(\Psi^-)$ contains all elements of the form $\alpha^i\beta$, $i=0, 1, \ldots, p-1$. In this situation, we  therefore conclude that  ${\rm Aut}(\Psi^-)=A(p, k)={\rm Aut}(\Psi)$, as required.
\end{proof}

In what follows, we will focus our attention on signed graphs in which the underlying graph is a complete graph or a generalized Petersen graph. Let us first consider the signed graphs with the underlying graph $K_n$. In fact, the number of signed graphs with on $K_n$ is equal to $|{\cal S}(K_n)|=2^{n\choose 2}$, and
among all such signed graphs,  by
Lemma \ref{nds}, the number of mutually switching non-equivalent signed graphs with the underlying graph $K_n$ ie equal to
$$|\Omega_s(K_n)|=2^{{n\choose 2}-n+1}=2^{(n-2)(n-1)/2}.$$
The problem of finding the number of switching non-isomorphic graphs on $K_n$ is also useful as well as interesting. Recall that if we consider the action of ${\rm Aut}(K_n)$ on $\Omega_s(K_n)$ by $[K_n, \sigma]^\varphi=[K_n, \sigma^\varphi]$, where $\sigma^\varphi(v_iv_j)=\sigma(\varphi^{-1}(v_i)\varphi^{-1}(v_j))$, then by Lemma \ref{orbits},  the number of switching non-isomorphic graphs on $K_n$ is equal to the number of orbits of this action. It is well known that the automorphism group of the complete graph on $n$ vertices ${\rm Aut}(K_n)$ is isomorphic to $S_n$, which is $n$-fold transitive on the set $\{1, 2, \ldots, n\}$ for any $n$.
Clearly $n$-fold transitivity implies $k$-fold transitive for $1\leqslant k\leqslant n$, so any triply transitivity implies doubly transitive and transitive. Thus, when $n\geqslant 3$, ${\rm Aut}(K_n)\cong S_n$ is doubly transitive,
which means that ${\rm Aut}(K_n)$ is edge-transitive.

\begin{lem} \label{orbit}  Let ${\rm Aut}(K_n)$ act on $\Omega_s(K_n)$ as above, and $[K_n, \sigma]\in \Omega_s(K_n)$. Then, we have
$$[K_n, \sigma]^{{\rm Aut}(K_n)}=\left\{[K_n, \sigma'] \ | \ (K_n, \sigma')^-\cong (K_n, \sigma)^-\right\}.$$
\end{lem}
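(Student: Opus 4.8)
The plan is to prove the asserted set equality by establishing the two inclusions separately, leaning on the key observation already recorded in the proof of Theorem \ref{lower}: since the underlying graph is complete, two signed graphs $(K_n, \sigma_1)$ and $(K_n, \sigma_2)$ are isomorphic \emph{as signed graphs} if and only if their negative subgraphs $(K_n, \sigma_1)^-$ and $(K_n, \sigma_2)^-$ are isomorphic as ordinary graphs. The reason is that in $K_n$ every pair of vertices is an edge, so a vertex bijection $\varphi$ preserves signs precisely when it carries negative edges to negative edges; thus a signed-graph isomorphism is nothing but a graph isomorphism between the two negative subgraphs, and any such isomorphism is automatically a member of ${\rm Aut}(K_n)\cong S_n$. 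I would state this at the outset and use it throughout; I would also note that the right-hand set is to be read as the collection of those classes $[K_n, \sigma']$ possessing \emph{some} representative $\sigma'$ with $(K_n, \sigma')^-\cong (K_n, \sigma)^-$.

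For the inclusion $\subseteq$, I would take a typical element of the orbit, which by the definition of the action has the form $[K_n, \sigma]^\varphi = [K_n, \sigma^\varphi]$ for some $\varphi \in {\rm Aut}(K_n)$. Applying Lemma \ref{isonegative} with $\Gamma = K_n$ gives $(K_n, \sigma^\varphi)^- \cong (K_n, \sigma)^-$. Hence, taking $\sigma' = \sigma^\varphi$, the class $[K_n, \sigma^\varphi]$ is exhibited as a member of the right-hand set, and the inclusion follows.

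For the reverse inclusion $\supseteq$, I would start from a class $[K_n, \sigma']$ certified to lie in the right-hand set by a representative $\sigma'$ with $(K_n, \sigma')^- \cong (K_n, \sigma)^-$. Let $\varphi$ be a graph isomorphism realizing this; then $\varphi \in {\rm Aut}(K_n)$, and unravelling the condition that $\varphi$ maps $E_{(K_n,\sigma)^-}$ onto $E_{(K_n,\sigma')^-}$ together with the completeness of $K_n$ yields $\sigma(uv) = \sigma'(\varphi(u)\varphi(v))$ for every edge $uv$. Comparing this with the definition $\sigma^\varphi(\varphi(u)\varphi(v)) = \sigma(uv)$ gives $\sigma^\varphi = \sigma'$ on all edges, so in fact $\sigma' = \sigma^\varphi$. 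Consequently $[K_n, \sigma'] = [K_n, \sigma^\varphi] = [K_n, \sigma]^\varphi$ lies in the orbit, completing the inclusion and hence the proof.

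The routine part is the bookkeeping of the two inclusions; the one point that needs care --- and where the hypothesis that the graph is complete is essential --- is the step translating an isomorphism of the negative subgraphs into the pointwise identity $\sigma' = \sigma^\varphi$. I expect this to be the crux: for a non-complete underlying graph the negative subgraph would not determine the sign function up to an automorphism, and the stated equality could fail. I would therefore isolate and justify this translation explicitly, making clear that it is exactly where completeness of $K_n$ is used, rather than relegating it to a casual remark.
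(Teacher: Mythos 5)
Your proof is correct and takes essentially the same route as the paper's: both argue the two inclusions separately, obtaining the orbit's containment in the right-hand set from $(K_n,\sigma^\varphi)^-\cong(K_n,\sigma)^-$ (you invoke Lemma \ref{isonegative}, whereas the paper re-derives that fact inline), and the reverse containment by upgrading a graph isomorphism of the negative subgraphs to the pointwise identity $\sigma'=\sigma^\varphi$, which is exactly where completeness of $K_n$ is used. Your explicit flagging of the completeness step and of the ``some representative'' reading of the right-hand set matches the paper's implicit usage and is merely a presentational refinement, not a different argument.
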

\begin{proof}  We put ${\cal O}:=[K_n, \sigma]^{{\rm Aut}(K_n)}$ and $\mathcal{A}:=\left\{[K_n, \sigma'] \ | \ (K_n, \sigma')^-\cong (K_n, \sigma)^-\right\}$. Assume first that $[K_n, \sigma'] \in \mathcal{A}$ and $\varphi: V_\Gamma \rightarrow V_\Gamma$ is a graph isomorphism between $(K_n, \sigma)^-$ and $(K_n, \sigma')^-$. Obviously $\varphi\in {\rm Aut}(K_n)$. We claim that
$[K_n, \sigma']=[K_n, \sigma]^\varphi\in \mathcal{O}$. It suffices to show that $(K_n, \sigma')=(K_n, \sigma)^\varphi$, or equivalently, $(K_n, \sigma')=(K_n, \sigma^\varphi)$. Note that the last equality is also equivalent to $\sigma'=\sigma^\varphi$. Since $\varphi: V_\Gamma \rightarrow V_\Gamma$ is a graph isomorphism between $(K_n, \sigma)^-$ and $(K_n, \sigma')^-$,  it follows that $v_iv_j\in E_{(K_n, \sigma)^-}$ if and only if $\varphi(v_i)\varphi(v_j)\in E_{(K_n, \sigma')^-}$, or equivalently,  $\sigma(v_iv_j)=-1$ if and only if $\sigma'(\varphi(v_i)\varphi(v_j))=-1$. Substituting $\varphi^{-1}(v_i)$ for $v_i$ and $\varphi^{-1}(v_j)$ for $v_j$  in the last expression, we obtain  $\sigma(\varphi^{-1}(v_i)\varphi^{-1}(v_j))=-1$ if and only if $\sigma'(v_iv_j)=-1$, or equivalently, $\sigma^\varphi(v_iv_j)=-1$ if and only if $\sigma'(v_iv_j)=-1$. This means that $\sigma^\varphi=\sigma'$, as required.

Assume next that $[K_n, \sigma]^\varphi\in \mathcal{O}$. To prove $[K_n, \sigma]^\varphi\in \mathcal{A}$, we must show that ${(K_n, \sigma)^\varphi}^-\cong (K_n, \sigma)^-$, or equivalently, $(K_n, \sigma^\varphi)^-\cong (K_n, \sigma)^-$. We now consider the automorphism $\varphi: V_\Gamma \rightarrow V_\Gamma$ and note that
if $v_iv_j\in E_{ (K_n, \sigma)^-}$, then $\sigma(v_iv_j)=-1$. But then it is immediate that
$$\sigma^\varphi(\varphi(v_i)\varphi(v_j))=\sigma\left(\varphi^{-1}(\varphi(v_i)) \varphi^{-1}(\varphi(v_j))\right)=\sigma(v_iv_j)=-1,$$
and so $\varphi(v_i)\varphi(v_j)\in E_{(K_n, \sigma^\varphi)^-}$. Obviously, the reverse direction also holds true. Therefore, we conclude that $\varphi$ is an isomorphism between $(K_n, \sigma^\varphi)^-$ and $(K_n, \sigma)^-$, as required. \end{proof}

\section{Some Applications}

In the following result, we focus our attention on a special case, that is $K_5$. We will use the following notation regarding the signed graphs:
$$\mu[\Gamma, \sigma]=\min_{\Sigma \in [\Gamma, \sigma]} |E_{\Sigma^-}|.$$
 In this section, the negative and positive edges are drawn in red and blue lines, respectively.

\begin{thm} \label{complete}  {\rm (see \cite{MS})} There are exactly seven signed graphs with the underlying graph $K_5$ up to switching isomorphism.
\end{thm}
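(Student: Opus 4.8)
The plan is to recast the statement as an orbit-counting problem and then reduce it to the combinatorics of unbalanced triangles. First, by Lemma~\ref{orbits} the number of switching non-isomorphic signed graphs on $K_5$ equals the number of orbits of ${\rm Aut}(K_5)\cong S_5$ acting on $\Omega_s(K_5)$, and by Lemma~\ref{nds} we have $|\Omega_s(K_5)|=2^{\binom{5}{2}-5+1}=2^6=64$. Combining Lemma~\ref{orbit} (the $S_5$-orbit of a class is determined by the isomorphism type of the negative subgraph of a representative) with Lemmas~\ref{HA} and~\ref{isonegative}, two classes $[K_5,\sigma_1]$ and $[K_5,\sigma_2]$ lie in the same orbit precisely when $(K_5,\sigma_1)^-$ and $(K_5,\sigma_2)^-$ can be carried into one another by a vertex permutation followed by taking the symmetric difference with an edge cut of $K_5$. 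Indeed, the negative subgraphs of the representatives of a single class are exactly the graphs obtained from a fixed one by symmetric difference with a cut (Lemma~\ref{HA}), and isomorphism of $K_5$ is an arbitrary relabelling. Thus counting switching isomorphism classes is the same as counting graphs on $5$ vertices up to isomorphism together with this cut-complementation.

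The key invariant is the number of \emph{unbalanced triangles} of $(K_5,\sigma)$, that is, the number of $3$-subsets carrying an odd number of negative edges. Since switching at a vertex set multiplies the signs along the corresponding cut and so preserves the sign of every cycle, and since a relabelling merely permutes triangles, this count is constant on each switching isomorphism class. To obtain the lower bound I would exhibit seven signed graphs whose negative subgraphs are $\overline{K_5}$, $K_2\cup 3K_1$, $P_3\cup 2K_1$, $C_5$, $2K_2\cup K_1$, $C_3\cup 2K_1$ and $K_5$, and verify that their unbalanced-triangle counts are $0,3,4,5,6,7,10$ respectively. As these seven values are pairwise distinct, the seven signed graphs are pairwise switching non-isomorphic, so there are at least seven classes.

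For the matching upper bound I would show that every class contains a representative whose negative subgraph $G$ has maximum degree at most $2$. Choosing a representative that attains $\mu[K_5,\sigma]=\min_{\Sigma\in[K_5,\sigma]}|E_{\Sigma^-}|$, observe that switching at a single vertex $v$ replaces its negative degree $d$ by $4-d$ and hence changes $|E_{\Sigma^-}|$ by $4-2d$; if some vertex had negative degree at least $3$ this would strictly decrease the number of negative edges, contradicting minimality. Graphs on five vertices with $\Delta\leqslant 2$ are exactly the disjoint unions of paths and cycles, of which there are eleven up to isomorphism. Computing the unbalanced-triangle count for each, I would find that only the seven values $0,3,4,5,6,7,10$ occur, so every class coincides with one of the seven found above; together with the previous paragraph this yields exactly seven.

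The main obstacle is the finite but delicate bookkeeping in this last step: one must check that the eleven path-and-cycle graphs collapse to exactly seven classes. The unbalanced-triangle count is not a priori a complete invariant, so for each repeated value (here $4$, $5$, $6$ and $7$) I would have to produce an explicit permutation-plus-cut identifying the two representatives that realize it; for instance $C_3\cup 2K_1$ and $P_3\cup K_2$ both give $7$ and are matched after switching at a suitable pair of vertices, while $C_3\cup K_2$ reduces to $K_5$ by switching at the two endpoints of its $K_2$. Once these identifications are verified, the seven distinct values are forced to correspond to the seven classes, completing the proof. As an alternative that sidesteps the ad hoc merging, one could instead apply Burnside's lemma to the $S_5$-action on the $64$ switching classes, though computing the fixed-point contributions over the seven conjugacy classes of $S_5$ is of comparable length.
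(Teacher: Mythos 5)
Your proposal is correct, and it takes a genuinely different route from the paper's proof. Both arguments start the same way, using Lemma~\ref{orbits} to reduce the theorem to counting orbits of ${\rm Aut}(K_5)\cong S_5$ on the $2^6$ switching classes, but the mechanisms then diverge. The paper handles distinctness and exhaustiveness simultaneously by orbit bookkeeping: for each of seven candidate orbits, organized by the value of $\mu[K_5,\sigma]$ ($0,1,2,3,4$) and the shape of the minimal negative subgraph, it counts the number of switching classes in that orbit, obtaining $1,10,15,12,15,10,1$, and concludes because these sum to $64=|\Omega_s(K_5)|$. You split the two halves instead. Distinctness comes from the unbalanced-triangle count, which is indeed a switching-isomorphism invariant (switching preserves the sign of every cycle, and relabelling permutes triangles); its values $0,3,4,5,6,7,10$ on your seven representatives are pairwise distinct, and I verified all seven counts. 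Exhaustiveness comes from your observation that a representative attaining $\mu[K_5,\sigma]$ has negative maximum degree at most $2$, since switching at a vertex of negative degree $d$ changes $|E_{\Sigma^-}|$ by $4-2d$; this leaves only the eleven disjoint unions of paths and cycles on five vertices, after which explicit switchings merge the pairs realizing equal invariant values. You exhibit two of the needed five identifications and assert the rest; they do all exist ($C_4\cup K_1$ to $P_3\cup 2K_1$ by switching at two opposite vertices of the $C_4$; $P_4\cup K_1$ to $C_5$ by switching at both endpoints of the path; $P_5=uvwxy$ to $2K_2\cup K_1$ by switching at $\{v,x\}$; $P_3\cup K_2$ to $C_3\cup 2K_1$ by switching at the middle of the path; $C_3\cup K_2$ to all-negative $K_5$ by switching at the two endpoints of the $K_2$), so completing your writeup is a matter of three more short verifications. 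As for what each approach buys: the paper's counting yields the orbit sizes as a byproduct, whereas your argument never actually needs $|\Omega_s(K_5)|=64$ and is more robust, since no global tally has to come out exactly right; moreover the triangle invariant is precisely the two-graph point of view of Mallows--Sloane, so your proof is arguably closer in spirit to the cited reference \cite{MS} than the paper's own argument.
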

\begin{proof}  First of all, we observe that by the preceding paragraph  $|{\cal S}(K_5)|=2^{10}$ and $|\Omega_s(K_5)|=2^{6}$.  Let ${\rm Aut}(K_5)$ act on $\Omega_s(K_5)$ by $[K_5, \sigma]^\varphi=[K_5, \sigma^\varphi]$, where $\sigma^\varphi(v_iv_j)=\sigma(\varphi^{-1}(v_i)\varphi^{-1}(v_j))$. We shall try to determine the number of orbits of this action instead of the number of switching non-isomorphic graphs on $K_5$.
More precisely, we shall  show that $2^6$ classes in $\Omega_s(K_5)$ are partitioned into the 7 orbits $\mathcal{O}_1, \mathcal{O}_2, \ldots, \mathcal{O}_7$, of size $1$, $10$, $15$, $12$, $15$, $10$ and $1$, respectively, corresponding to the signed graphs given in Figure 2.

\vspace{0.5cm}

\begin{tikzpicture}[mystyle/.style={draw,shape=circle}]
\def\ngon{5}
\node[regular polygon,regular polygon sides=\ngon,minimum size=2.5cm] (p) {};
\foreach\x in {1,...,\ngon}{\node[mystyle] (p\x) at (p.corner \x){};}
\foreach\x in {1,...,\numexpr\ngon-1\relax}{
\foreach\y in {\x,...,\ngon}{
\draw (p\x) -- (p\y)[blue];
\put(-18,-50){$\mathbf{(1).} \hspace{0.2cm} 1$}}}

\hspace{4cm}

\def\ngon{5}
\node[regular polygon,regular polygon sides=\ngon,minimum size=2.5cm] (p) {};
\foreach\x in {1,...,\ngon}{\node[mystyle] (p\x) at (p.corner \x){};}
\foreach\x in {1,...,\numexpr\ngon-1\relax}{
\foreach\y in {\x,...,\ngon}{
\draw (p\x) -- (p\y)[blue];
\draw (p1) -- (p2)[red];
\put(-18,-50){$\mathbf{(2).} \hspace{0.2cm} 10$}}}

\hspace{4cm}

\def\ngon{5}
\node[regular polygon,regular polygon sides=\ngon,minimum size=2.5cm] (p) {};
\foreach\x in {1,...,\ngon}{\node[mystyle] (p\x) at (p.corner \x){};}
\foreach\x in {1,...,\numexpr\ngon-1\relax}{
\foreach\y in {\x,...,\ngon}{
\draw (p\x) -- (p\y)[blue];
\draw (p1) -- (p2)[red];
\draw (p1) -- (p5)[red];
\put(-18,-50){$\mathbf{(3).} \hspace{0.2cm} 15$}}}

\hspace{4cm}

\def\ngon{5}
\node[regular polygon,regular polygon sides=\ngon,minimum size=2.5cm] (p) {};
\foreach\x in {1,...,\ngon}{\node[mystyle] (p\x) at (p.corner \x){};}
\foreach\x in {1,...,\numexpr\ngon-1\relax}{
\foreach\y in {\x,...,\ngon}{
\draw (p\x) -- (p\y)[blue];
\draw (p1) -- (p2)[red];
\draw (p1) -- (p5)[red];
\draw (p4) -- (p5)[red];
\put(-18,-50){$\mathbf{(4).} \hspace{0.2cm} 12$}}}

\end{tikzpicture}

\vspace{1cm}

\begin{tikzpicture}[mystyle/.style={draw,shape=circle}]
\hspace{2cm}

\def\ngon{5}
\node[regular polygon,regular polygon sides=\ngon,minimum size=2.5cm] (p) {};
\foreach\x in {1,...,\ngon}{\node[mystyle] (p\x) at (p.corner \x){};}
\foreach\x in {1,...,\numexpr\ngon-1\relax}{
\foreach\y in {\x,...,\ngon}{
\draw (p\x) -- (p\y)[blue];
\draw (p1) -- (p2)[red];
\draw (p4) -- (p5)[red];
\put(-18,-50){$\mathbf{(5).} \hspace{0.2cm} 15$}}}

\hspace{4cm}

\def\ngon{5}
\node[regular polygon,regular polygon sides=\ngon,minimum size=2.5cm] (p) {};
\foreach\x in {1,...,\ngon}{\node[mystyle] (p\x) at (p.corner \x){};}
\foreach\x in {1,...,\numexpr\ngon-1\relax}{
\foreach\y in {\x,...,\ngon}{
\draw (p\x) -- (p\y)[blue];
\draw (p1) -- (p2)[red];
\draw (p1) -- (p5)[red];
\draw (p2) -- (p5)[red];
\put(-17,-50){$\mathbf{(6).} \hspace{0.2cm} 10$}}}
\hspace{4cm}

\def\ngon{5}
\node[regular polygon,regular polygon sides=\ngon,minimum size=2.5cm] (p) {};
\foreach\x in {1,...,\ngon}{\node[mystyle] (p\x) at (p.corner \x){};}
\foreach\x in {1,...,\numexpr\ngon-1\relax}{
\foreach\y in {\x,...,\ngon}{
\draw (p\x) -- (p\y)[blue];
\draw (p1) -- (p2)[red];
\draw (p1) -- (p5)[red];
\draw (p2) -- (p5)[red];
\draw (p3) -- (p4)[red];
\put(-15,-50){$\mathbf{(7).} \hspace{0.2cm} 1$}}}
\end{tikzpicture}

\vspace{0.5cm}

\begin{center}
\textbf{Fig. 2.}    Seven switching non-isomorphic signed graphs on $K_5$.
\end{center}

Before we continue with the proof of this result we make some general remarks.
For signed graph $(K_5, \sigma_i)$, it follows that
\begin{equation}\label{e3} \begin{array}{lll}  \mathcal{O}_i= [K_5, \sigma_i]^{{\rm Aut}(K_5)} & = & \left\{[K_5, \sigma_i]^{\varphi} \ | \ \varphi\in {\rm Aut}(K_5)\right\}  \\[0.2cm]
 & = & \left\{[K_5, \sigma_i^{\varphi}] \ | \ \varphi\in {\rm Aut}(K_5)\right\} \ \ \ \ \ \ \mbox{(by the definition)}\\[0.2cm]
 & = &  \left\{[K_5, \sigma] \ | \ (K_5, \sigma)^-\cong (K_5, \sigma_i)^-\right\}. \ \ \ \mbox{(by Lemma \ref{orbit})}
\end{array}
\end{equation} Note that ${\rm Aut}(K_5)$ is isomorphic to $S_5$  which acts on the vertex-set $5$-transitive, and so
$K_5$ is edge-transitive, that is, if $v_iv_j, v_kv_l\in E_{K_5}$, then there exists an automorphism $\varphi$ in ${\rm Aut}(K_5)$ such that $\varphi(v_iv_j)=v_kv_l$.  We can now return and finish off our proof.  We treat separately the different cases:

\begin{itemize}

\item[{\rm (a)}]  $\mu[K_5, \sigma_1]=0$. The signed graph $(K_5, \sigma_1)$ is depicted in Fig. 2 (1). In this case,
$(K_5, \sigma_1)^-$ is the null graph, that is,  a graph with no edges. We see from (\ref{e3}) that  $\mathcal{O}_1$ contains
exactly one class $[K_5, \sigma_1]$.

\item[{\rm (b)}] $\mu[K_5, \sigma_2]=1$. The signed graph $(K_5, \sigma)$ is shown in Fig. 2 (2).  In this case,
$(K_5, \sigma_2)^-$ consists exactly of three isolated vertices and hence  $\mathcal{O}_2$ contains
exactly $10$ (one for each edge) class $[K_5, \sigma]$ for which $|\sigma^{-1}(-1)|=1$.

\item[{\rm (c)}] $\mu[K_5, \sigma_3]=2$. We distinguish between the cases: $(K_5, \sigma_3)^-$ contains two adjacent edges or two disjoint edges.
\begin{itemize}
\item[{\rm (c.1)}]  {\em $(K_5, \sigma_3)^-$ contains two adjacent edges.}  In this case, the signed graph appears as shown in Fig. 2 (3). Therefore, in veiw of (\ref{e3}), we need to count the number of signed graphs  $(K_5, \sigma)$ such that $(K_5, \sigma)^-\cong (K_5, \sigma_3)^-$. It is now easy to check that the number of such  signed graphs is $$\frac{1}{2}\left({5\choose 1}\times {4\choose 2}\right)=15.$$
Note that by switching the vertex of degree $2$ in  $(K_5, \sigma)^-$ we obtain a switching equivalent signed graph
of the same type.
\item[{\rm (c.2)}]   {\em $(K_5, \sigma_3)^-$ contains  two disjoint edges.}  The corresponding signed graph in this case  is as shown in Fig. 2 (5). As before, we conclude similarly that there are $\frac{1}{2}\left({5\choose 2}\times {3\choose 2}\right)=15$ signed graphs  $(K_5, \sigma)$ such that $(K_5, \sigma)^-\cong (K_5, \sigma_3)^-$.
\end{itemize}
\item[{\rm (d)}] $\mu[K_5, \sigma_4]=3$. Here, we distinguish among three cases: the number of vertices of degree $2$ is $3$, $2$ or $1$.

\begin{itemize}
\item[{\rm (d.1)}]  {\em $(K_5, \sigma_4)^-$ contains three vertices of degree $2$.}  The signed graph $(K_5, \sigma_4)$ in this case is shown in Fig. 2(6).  A routine argument shows that the number of signed graphs  $(K_5, \sigma)$ such that $(K_5, \sigma)^-\cong (K_5, \sigma_4)^-$  is ${5\choose 3}=10$.
\item[{\rm (d.2)}]   {\em $(K_5, \sigma_4)^-$ contains  two vertices of degree $2$.}  In this case, $(K_5, \sigma_4)^-$ is a path of length $3$ as shown in Fig. 2 (4). On the one hand, the number of such paths is $(5\times 4\times 3 \times 2)/2=60$. On the other hand,  if $v_1v_2v_3v_4$ is a path of length $3$, then by switching on $\{v_2\}$,  $\{v_3\}$, $\{v_1, v_3\}$ or $\{v_2, v_4\}$ we obtain a switching equivalent signed graph of the same type.
Hence, in this case, we have $60/5=12$ switching non-equivalent signed graphs $(K_5, \sigma)$ such that $(K_5, \sigma)^-\cong (K_5, \sigma_4)^-$.
\item[{\rm (d.3)}]  {\em $(K_5, \sigma_4)^-$ contains one vertices of degree $2$.}  In this case, by switching the vertex of degree $2$, we obtain a switching equivalent signed graph which has considered in (d.1).
\end{itemize}
\item[{\rm (e)}] $\mu[K_5, \sigma_5]=4$. The signed graph $(K_5, \sigma_5)$ is depicted in Fig. 2 (7).
If  $(K_5, \sigma)^-\cong (K_5, \sigma_5)^-$,  then it follows by switching at vertices of degree $1$ that both of
$(K_5, \sigma)$ and $(K_5, \sigma_5)$ are switching equivalent with $(K_5, \tau)$ for which $\tau^{-1}(-1)=E_{K_5}$.
Hence, we conclude that $[K_5, \sigma]=[K_5, \sigma_5]$,  which shows that $\mathcal{O}_7$ contains
exactly one class $[K_5, \sigma_5]$.
\end{itemize}

It follows from this discussion that the set $\Omega_s(K_5)$ can be partitioned into the 7 orbits $\mathcal{O}_1, \ldots, \mathcal{O}_7$, as required.
\end{proof}

Similarly, we have the following theorem.

\begin{thm}
There are exactly $36$ signed graphs with the underlying graph ${\rm GP}(7,2)$ up to switching isomorphism.
\end{thm}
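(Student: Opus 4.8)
The plan is to count the switching non-isomorphic signed graphs on $\Lambda={\rm GP}(7,2)$ as the number of orbits of the action of ${\rm Aut}(\Lambda)$ on $\Omega_s(\Lambda)$, and to evaluate this number via the Cauchy--Frobenius (Burnside) lemma. First I would record the two structural inputs. Since $7$ is prime and $2^2=4\not\equiv\pm1\pmod 7$, Theorem \ref{aut}(c) gives ${\rm Aut}(\Lambda)=\langle\alpha,\beta\mid\alpha^7=\beta^2=1,\alpha\beta=\beta\alpha^{-1}\rangle$, the dihedral group of order $14$; and since $\Lambda$ is connected with $n=14$, $m=21$, Lemma \ref{nds} gives $|\Omega_s(\Lambda)|=2^{m-n+1}=2^8=256$. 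By Lemma \ref{orbits} the quantity sought is the number of orbits, so by Burnside it equals $\frac{1}{14}\sum_g|{\rm Fix}(g)|$, where ${\rm Fix}(g)$ is the set of switching classes fixed by $g$. As the six nontrivial rotations $\alpha^i$ all generate $\langle\alpha\rangle$, and the seven reflections $\alpha^i\beta$ are mutually conjugate in ${\rm Aut}(\Lambda)$, it suffices to evaluate $|{\rm Fix}(g)|$ for $g\in\{1,\alpha,\beta\}$.

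The key reduction is to pass to the cycle space. Writing $Z=Z(\Lambda;\mathbb{F}_2)$ for the cycle space (dimension $m-n+1=8$), Lemma \ref{HA} shows that a switching class is determined by the balances of the cycles, i.e. by an $\mathbb{F}_2$-linear functional on $Z$, and ${\rm Aut}(\Lambda)$ acts on $Z$ and dually on these functionals. A functional $f$ is $g$-invariant precisely when it vanishes on ${\rm im}(g-I|_Z)$, so $|{\rm Fix}(g)|=2^{\dim\ker(g-I|_Z)}$. Moreover $\ker(g-I|_Z)=Z\cap{\rm Fix}_E(g)$, where ${\rm Fix}_E(g)\subseteq\mathbb{F}_2^{E_\Lambda}$ is the space of $g$-invariant edge vectors, spanned by the indicator sums of the $g$-orbits on $E_\Lambda$; I would compute this intersection by restricting the boundary map $\partial:\mathbb{F}_2^{E_\Lambda}\to\mathbb{F}_2^{V_\Lambda}$ to ${\rm Fix}_E(g)$ and taking its kernel. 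For $g=1$ this returns $|{\rm Fix}(1)|=2^8=256$.

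For the rotation, $\alpha$ has exactly the three edge orbits $E_1,E_2,E_3$ of the paper, so ${\rm Fix}_E(\alpha)$ is three-dimensional, spanned by the outer cycle (the sum of the edges in $E_1$), the inner cycle (sum of $E_2$), and the spoke set (sum of $E_3$). The first two are even subgraphs and hence lie in $Z$, while the boundary of the spoke set is the all-ones vector (every vertex meets exactly one spoke), so it is not in $Z$. Thus $Z\cap{\rm Fix}_E(\alpha)$ has dimension $2$ and each of the six rotations fixes $2^2=4$ classes. For the reflection $\beta:(i,j)\mapsto(i,-j)$, I would first determine its edge orbits: a short congruence check shows that $\beta$ fixes exactly three edges (the outer edge $(0,3)(0,4)$, the inner edge $(1,6)(1,1)$, and the spoke $(0,0)(1,0)$) and splits the remaining eighteen edges into nine transposed pairs, whence $\dim{\rm Fix}_E(\beta)=12$. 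Restricting $\partial$, its image lies in the space of $\beta$-invariant vertex vectors of even total weight, which has dimension $7$; the crux is to show the image is all of this space, which I would do by exhibiting seven invariant edge vectors (the three fixed edges together with four suitable transposed pairs) whose boundaries are linearly independent. Then $\dim\bigl(Z\cap{\rm Fix}_E(\beta)\bigr)=12-7=5$, so each of the seven reflections fixes $2^5=32$ classes. Combining, the number of orbits is $\frac{1}{14}\left(256+6\cdot4+7\cdot32\right)=\frac{504}{14}=36$.

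I expect the reflection computation to be the main obstacle: establishing that $\partial$ restricted to ${\rm Fix}_E(\beta)$ has rank exactly $7$ requires careful bookkeeping of the nine transposed edge pairs and their boundaries, and it is here that an off-by-one slip would change the final count. Everything else---the group structure, the cardinality of $\Omega_s(\Lambda)$, and the rotation count---is immediate from the cited results and the transitivity of $\langle\alpha\rangle$ on each $E_i$ noted before Theorem \ref{auto-p}.
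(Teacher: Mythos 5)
Your proposal is correct, and it takes a genuinely different route from the paper. The paper's proof is an enumeration in the style of its $K_5$ argument: it exhibits $36$ representative signed graphs (Figure 3) together with their orbit sizes (four orbits of size $1$, twenty-eight of size $7$, four of size $14$, summing to $2^8=256$), and leaves the verification that these exhaust all classes essentially to the reader. You instead count orbits via the Cauchy--Frobenius lemma, reducing each fixed-point count to $\mathbb{F}_2$-linear algebra in the cycle space, and your numbers check out. For a nontrivial rotation, the invariant edge vectors are spanned by the orbit sums of $E_1,E_2,E_3$, of which only the outer and inner $7$-cycles are even subgraphs (every vertex meets exactly one spoke), so $|{\rm Fix}(\alpha^i)|=2^2=4$. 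For the reflection $\beta$, the three fixed edges are indeed $(0,3)(0,4)$, $(1,6)(1,1)$ and $(0,0)(1,0)$, with the other eighteen edges in nine transposed pairs; writing the boundaries of the twelve orbit sums against the eight $\beta$-orbits of vertices, the two fixed vertices $(0,0),(1,0)$ impose the single condition that the fixed spoke not occur, and each of the six vertex transpositions imposes one further condition, and these seven conditions are independent (each of five of them contains a variable occurring nowhere else, and the remaining two involve disjoint variable sets), so the restricted boundary map has rank exactly $7$ and $|{\rm Fix}(\alpha^i\beta)|=2^{12-7}=32$. Hence the orbit count is $\frac{1}{14}\left(256+6\cdot 4+7\cdot 32\right)=36$, as claimed. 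Your computation also cross-checks against the paper's orbit data: with four orbits of size $1$, twenty-eight of size $7$ and four of size $14$, one has $\sum_g|{\rm Fix}(g)|=36\cdot 14=504$; a rotation fixes precisely the four size-$1$ classes, and a reflection fixes those four plus exactly one class in each size-$7$ orbit, giving $4+28=32$. As for what each approach buys: the paper's enumeration produces the explicit classification (the actual list of representatives, which is what one needs to study the individual signed graphs), whereas your argument yields a short, independently verifiable count that requires no representatives and would extend with the same template to ${\rm GP}(p,k)$ for any prime $p$ with $k^2\not\equiv\pm 1\pmod p$, where hand enumeration quickly becomes impractical.
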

\begin{proof} The proof is
quite similar to the proof of Theorem \ref{complete}, so we
avoid here full explanation of all details. Let $\Gamma={\rm GP}(7,2)$. As before, it follows that  $|{\cal S}(\Gamma)|=2^{21}$ and $|\Omega_s(\Gamma)|=2^{8}$.  Let ${\rm Aut}(\Gamma)$ act on $\Omega_s(\Gamma)$ by $[\Gamma, \sigma]^\varphi=[\Gamma, \sigma^\varphi]$. Again, we determine the number of orbits of this action instead of the number of switching non-isomorphic graphs on $\Gamma$.
As a matter of fact, the $2^8$ classes in $\Omega_s(\Gamma)$ can be partitioned into the $36$ orbits $\mathcal{O}_1, \mathcal{O}_2, \ldots, \mathcal{O}_{36}$, of size $1$, $7$, $7$, $7$, $7$, $7$, $7$, $7$, $7$, $14$, $7$, $7$,  $7$, $7$, $14$, $7$, $14$, $7$, $7$, $7$, $7$, $1$, $7$, $7$, $7$, $7$, $7$, $7$, $1$, $7$, $7$, $7$, $7$, $14$, $1$ and $7$, respectively, corresponding to the signed graphs given in Figure 3.\end{proof}

\begin{center}
{\bf Fig. 3.}  36 switching non-isomorphic signed graphs on ${\rm GP}(7,2)$
\end{center}
\begin{center}


\tikzset{
    every node/.style={
        circle,
       fill=black!100,
        inner sep=1pt,
        minimum width=4pt
    }
}
\begin{scriptsize}

\begin{tikzpicture}[thick,scale=0.4,shorten >=2pt]
    \draw (0,3) node {} -- (0,1.5) [blue] node {};
    \draw (2.3,1.9) node {} -- (1.1,0.9) [blue] node {};
    \draw (2.9,-0.6) node {} -- (1.4,-0.3) [blue] node {};
    \draw (1.3,-2.7) node {} -- (0.6,-1.3) [blue] node {};
    \draw (-2.3,1.9) node {} -- (-1.1,0.9) [blue] node {};
    \draw (-2.9,-0.6) node {} -- (-1.4,-0.3) [blue] node {};
    \draw (-1.3,-2.7) node {} -- (-0.6,-1.3) [blue] node {};

   \draw (0,3) node {} -- (2.3,1.9) [blue] node {};
   \draw (2.3,1.9) node {} -- (2.9,-0.6) [blue] node {};
   \draw (2.9,-0.6) node {} -- (1.3,-2.7) [blue] node {};
   \draw (1.3,-2.7) node {} -- (-1.3,-2.7) [blue] node {};
   \draw (-2.3,1.9) node {} -- (-2.9,-0.6) [blue] node {};
   \draw (-2.9,-0.6) node {} -- (-1.3,-2.7) [blue] node {};
   \draw (-2.3,1.9) node {} -- (0,3) [blue] node {};

   \draw (0,1.5) node {} -- (-1.4,-0.3) [blue] node {};
   \draw (-1.4,-0.3) node {} -- (0.6,-1.3) [blue] node {};
   \draw (0.6,-1.3) node {} -- (1.1,0.9) [blue] node {};
   \draw (1.1,0.9) node {} -- (-1.1,0.9) [blue] node {};
   \draw (-1.1,0.9) node {} -- (-0.6,-1.3) [blue] node {};
   \draw (-0.6,-1.3) node {} -- (1.4,-0.3) [blue] node {};
   \draw (1.4,-0.3) node {} -- (0,1.5) [blue] node {};
   \put(-15,-45){$\mathbf{(1).} \hspace{0.2cm} 1$}
   \end{tikzpicture} \hspace{1cm}
\begin{tikzpicture}[thick,scale=0.4,shorten >=2pt]
    \draw (0,3) node {} -- (0,1.5) [blue] node {};
    \draw (2.3,1.9) node {} -- (1.1,0.9) [blue] node {};
    \draw (2.9,-0.6) node {} -- (1.4,-0.3) [blue] node {};
    \draw (1.3,-2.7) node {} -- (0.6,-1.3) [blue] node {};
    \draw (-2.3,1.9) node {} -- (-1.1,0.9) [blue] node {};
    \draw (-2.9,-0.6) node {} -- (-1.4,-0.3) [blue] node {};
    \draw (-1.3,-2.7) node {} -- (-0.6,-1.3) [blue] node {};

   \draw (0,3) node {} -- (2.3,1.9) [red] node {};%
   \draw (2.3,1.9) node {} -- (2.9,-0.6) [blue] node {};
   \draw (2.9,-0.6) node {} -- (1.3,-2.7) [blue] node {};
   \draw (1.3,-2.7) node {} -- (-1.3,-2.7) [blue] node {};
   \draw (-2.3,1.9) node {} -- (-2.9,-0.6) [blue] node {};
   \draw (-2.9,-0.6) node {} -- (-1.3,-2.7) [blue] node {};
   \draw (-2.3,1.9) node {} -- (0,3) [blue] node {};

   \draw (0,1.5) node {} -- (-1.4,-0.3) [blue] node {};
   \draw (-1.4,-0.3) node {} -- (0.6,-1.3) [blue] node {};
   \draw (0.6,-1.3) node {} -- (1.1,0.9) [blue] node {};
   \draw (1.1,0.9) node {} -- (-1.1,0.9) [blue] node {};
   \draw (-1.1,0.9) node {} -- (-0.6,-1.3) [blue] node {};
   \draw (-0.6,-1.3) node {} -- (1.4,-0.3) [blue] node {};
   \draw (1.4,-0.3) node {} -- (0,1.5) [blue] node {};
   \put(-15,-45){$\mathbf{(2).} \hspace{0.2cm} 7$}
   \end{tikzpicture} \hspace{1cm}
\begin{tikzpicture}[thick,scale=0.4,shorten >=2pt]
    \draw (0,3) node {} -- (0,1.5) [red] node {};%
    \draw (2.3,1.9) node {} -- (1.1,0.9) [blue] node {};
    \draw (2.9,-0.6) node {} -- (1.4,-0.3) [blue] node {};
    \draw (1.3,-2.7) node {} -- (0.6,-1.3) [blue] node {};
    \draw (-2.3,1.9) node {} -- (-1.1,0.9) [blue] node {};
    \draw (-2.9,-0.6) node {} -- (-1.4,-0.3) [blue] node {};
    \draw (-1.3,-2.7) node {} -- (-0.6,-1.3) [blue] node {};

   \draw (0,3) node {} -- (2.3,1.9) [blue] node {};
   \draw (2.3,1.9) node {} -- (2.9,-0.6) [blue] node {};
   \draw (2.9,-0.6) node {} -- (1.3,-2.7) [blue] node {};
   \draw (1.3,-2.7) node {} -- (-1.3,-2.7) [blue] node {};
   \draw (-2.3,1.9) node {} -- (-2.9,-0.6) [blue] node {};
   \draw (-2.9,-0.6) node {} -- (-1.3,-2.7) [blue] node {};
   \draw (-2.3,1.9) node {} -- (0,3) [blue] node {};

   \draw (0,1.5) node {} -- (-1.4,-0.3) [blue] node {};
   \draw (-1.4,-0.3) node {} -- (0.6,-1.3) [blue] node {};
   \draw (0.6,-1.3) node {} -- (1.1,0.9) [blue] node {};
   \draw (1.1,0.9) node {} -- (-1.1,0.9) [blue] node {};
   \draw (-1.1,0.9) node {} -- (-0.6,-1.3) [blue] node {};
   \draw (-0.6,-1.3) node {} -- (1.4,-0.3) [blue] node {};
   \draw (1.4,-0.3) node {} -- (0,1.5) [blue] node {};
    \put(-15,-45){$\mathbf{(3).} \hspace{0.2cm} 7$}
   \end{tikzpicture} \hspace{1cm}
\begin{tikzpicture}[thick,scale=0.4,shorten >=2pt]
    \draw (0,3) node {} -- (0,1.5) [blue] node {};
    \draw (2.3,1.9) node {} -- (1.1,0.9) [blue] node {};%
    \draw (2.9,-0.6) node {} -- (1.4,-0.3) [blue] node {};
    \draw (1.3,-2.7) node {} -- (0.6,-1.3) [blue] node {};
    \draw (-2.3,1.9) node {} -- (-1.1,0.9)  [blue] node {};
    \draw (-2.9,-0.6) node {} -- (-1.4,-0.3) [blue] node {};
    \draw (-1.3,-2.7) node {} -- (-0.6,-1.3) [blue] node {};

   \draw (0,3) node {} -- (2.3,1.9) [blue] node {};
   \draw (2.3,1.9) node {} -- (2.9,-0.6) [blue] node {};
   \draw (2.9,-0.6) node {} -- (1.3,-2.7) [blue] node {};
   \draw (1.3,-2.7) node {} -- (-1.3,-2.7) [blue] node {};
   \draw (-2.3,1.9) node {} -- (-2.9,-0.6) [blue] node {};
   \draw (-2.9,-0.6) node {} -- (-1.3,-2.7) [blue] node {};
   \draw (-2.3,1.9) node {} -- (0,3) [blue] node {};

   \draw (0,1.5) node {} -- (-1.4,-0.3) [blue] node {};
   \draw (-1.4,-0.3) node {} -- (0.6,-1.3) [blue] node {};
   \draw (0.6,-1.3) node {} -- (1.1,0.9) [blue] node {};
   \draw (1.1,0.9) node {} -- (-1.1,0.9) [blue] node {};
   \draw (-1.1,0.9) node {} -- (-0.6,-1.3) [blue] node {};
   \draw (-0.6,-1.3) node {} -- (1.4,-0.3) [blue] node {};
   \draw (1.4,-0.3) node {} -- (0,1.5) [red] node {};
    \put(-15,-45){$\mathbf{(4).} \hspace{0.2cm} 7$}
   \end{tikzpicture} \\ \vspace{1cm}

\begin{tikzpicture}[thick,scale=0.4,shorten >=2pt]
    \draw (0,3) node {} -- (0,1.5) [blue] node {};
    \draw (2.3,1.9) node {} -- (1.1,0.9) [blue] node {};
    \draw (2.9,-0.6) node {} -- (1.4,-0.3) [blue] node {};
    \draw (1.3,-2.7) node {} -- (0.6,-1.3) [blue] node {};
    \draw (-2.3,1.9) node {} -- (-1.1,0.9) [blue] node {};
    \draw (-2.9,-0.6) node {} -- (-1.4,-0.3) [blue] node {};
    \draw (-1.3,-2.7) node {} -- (-0.6,-1.3) [blue] node {};

   \draw (0,3) node {} -- (2.3,1.9) [blue] node {};
   \draw (2.3,1.9) node {} -- (2.9,-0.6) [red] node {};%
   \draw (2.9,-0.6) node {} -- (1.3,-2.7) [blue] node {};
   \draw (1.3,-2.7) node {} -- (-1.3,-2.7) [red] node {};%
   \draw (-2.3,1.9) node {} -- (-2.9,-0.6) [blue] node {};
   \draw (-2.9,-0.6) node {} -- (-1.3,-2.7) [blue] node {};
   \draw (-2.3,1.9) node {} -- (0,3) [blue] node {};

   \draw (0,1.5) node {} -- (-1.4,-0.3) [blue] node {};
   \draw (-1.4,-0.3) node {} -- (0.6,-1.3) [blue] node {};
   \draw (0.6,-1.3) node {} -- (1.1,0.9) [blue] node {};
   \draw (1.1,0.9) node {} -- (-1.1,0.9) [blue] node {};
   \draw (-1.1,0.9) node {} -- (-0.6,-1.3) [blue] node {};
   \draw (-0.6,-1.3) node {} -- (1.4,-0.3) [blue] node {};
   \draw (1.4,-0.3) node {} -- (0,1.5) [blue] node {};
   \put(-15,-45){$\mathbf{(5).} \hspace{0.2cm} 7$}
   \end{tikzpicture} \hspace{1cm}
\begin{tikzpicture}[thick,scale=0.4,shorten >=2pt]
    \draw (0,3) node {} -- (0,1.5) [blue] node {};
    \draw (2.3,1.9) node {} -- (1.1,0.9) [blue] node {};
    \draw (2.9,-0.6) node {} -- (1.4,-0.3) [blue] node {};
    \draw (1.3,-2.7) node {} -- (0.6,-1.3) [blue] node {};
    \draw (-2.3,1.9) node {} -- (-1.1,0.9) [blue] node {};
    \draw (-2.9,-0.6) node {} -- (-1.4,-0.3) [blue] node {};
    \draw (-1.3,-2.7) node {} -- (-0.6,-1.3) [blue] node {};

   \draw (0,3) node {} -- (2.3,1.9) [blue] node {};
   \draw (2.3,1.9) node {} -- (2.9,-0.6) [red] node {};%
   \draw (2.9,-0.6) node {} -- (1.3,-2.7) [blue] node {};
   \draw (1.3,-2.7) node {} -- (-1.3,-2.7) [blue] node {};
   \draw (-2.3,1.9) node {} -- (-2.9,-0.6) [blue] node {};
   \draw (-2.9,-0.6) node {} -- (-1.3,-2.7) [red] node {};%
   \draw (-2.3,1.9) node {} -- (0,3) [blue] node {};

   \draw (0,1.5) node {} -- (-1.4,-0.3) [blue] node {};
   \draw (-1.4,-0.3) node {} -- (0.6,-1.3) [blue] node {};
   \draw (0.6,-1.3) node {} -- (1.1,0.9) [blue] node {};
   \draw (1.1,0.9) node {} -- (-1.1,0.9) [blue] node {};
   \draw (-1.1,0.9) node {} -- (-0.6,-1.3) [blue] node {};
   \draw (-0.6,-1.3) node {} -- (1.4,-0.3) [blue] node {};
   \draw (1.4,-0.3) node {} -- (0,1.5) [blue] node {};
   \put(-15,-45){$\mathbf{(6).} \hspace{0.2cm} 7$}
   \end{tikzpicture} \hspace{1cm}
\begin{tikzpicture}[thick,scale=0.4,shorten >=2pt]
    \draw (0,3) node {} -- (0,1.5) [blue] node {};
    \draw (2.3,1.9) node {} -- (1.1,0.9) [red] node {};%
    \draw (2.9,-0.6) node {} -- (1.4,-0.3) [blue] node {};
    \draw (1.3,-2.7) node {} -- (0.6,-1.3) [red] node {};  %
    \draw (-2.3,1.9) node {} -- (-1.1,0.9) [blue] node {};
    \draw (-2.9,-0.6) node {} -- (-1.4,-0.3) [blue] node {};
    \draw (-1.3,-2.7) node {} -- (-0.6,-1.3) [blue] node {};

   \draw (0,3) node {} -- (2.3,1.9) [blue] node {};
   \draw (2.3,1.9) node {} -- (2.9,-0.6) [blue] node {};
   \draw (2.9,-0.6) node {} -- (1.3,-2.7) [blue] node {};
   \draw (1.3,-2.7) node {} -- (-1.3,-2.7) [blue] node {};
   \draw (-2.3,1.9) node {} -- (-2.9,-0.6) [blue] node {};
   \draw (-2.9,-0.6) node {} -- (-1.3,-2.7) [blue] node {};
   \draw (-2.3,1.9) node {} -- (0,3) [blue] node {};

   \draw (0,1.5) node {} -- (-1.4,-0.3) [blue] node {};
   \draw (-1.4,-0.3) node {} -- (0.6,-1.3) [blue] node {};
   \draw (0.6,-1.3) node {} -- (1.1,0.9) [blue] node {};
   \draw (1.1,0.9) node {} -- (-1.1,0.9) [blue] node {};
   \draw (-1.1,0.9) node {} -- (-0.6,-1.3) [blue] node {};
   \draw (-0.6,-1.3) node {} -- (1.4,-0.3) [blue] node {};
   \draw (1.4,-0.3) node {} -- (0,1.5) [blue] node {};
   \put(-15,-45){$\mathbf{(7).} \hspace{0.2cm} 7$}
   \end{tikzpicture} \hspace{1cm}
\begin{tikzpicture}[thick,scale=0.4,shorten >=2pt]
    \draw (0,3) node {} -- (0,1.5) [blue] node {};
    \draw (2.3,1.9) node {} -- (1.1,0.9) [red] node {};%
    \draw (2.9,-0.6) node {} -- (1.4,-0.3) [blue] node {};
    \draw (1.3,-2.7) node {} -- (0.6,-1.3) [blue] node {};
    \draw (-2.3,1.9) node {} -- (-1.1,0.9)  [blue] node {};
    \draw (-2.9,-0.6) node {} -- (-1.4,-0.3) [blue] node {};
    \draw (-1.3,-2.7) node {} -- (-0.6,-1.3) [red] node {};%

   \draw (0,3) node {} -- (2.3,1.9) [blue] node {};
   \draw (2.3,1.9) node {} -- (2.9,-0.6) [blue] node {};
   \draw (2.9,-0.6) node {} -- (1.3,-2.7) [blue] node {};
   \draw (1.3,-2.7) node {} -- (-1.3,-2.7) [blue] node {};
   \draw (-2.3,1.9) node {} -- (-2.9,-0.6) [blue] node {};
   \draw (-2.9,-0.6) node {} -- (-1.3,-2.7) [blue] node {};
   \draw (-2.3,1.9) node {} -- (0,3) [blue] node {};

   \draw (0,1.5) node {} -- (-1.4,-0.3) [blue] node {};
   \draw (-1.4,-0.3) node {} -- (0.6,-1.3) [blue] node {};
   \draw (0.6,-1.3) node {} -- (1.1,0.9) [blue] node {};
   \draw (1.1,0.9) node {} -- (-1.1,0.9) [blue] node {};
   \draw (-1.1,0.9) node {} -- (-0.6,-1.3) [blue] node {};
   \draw (-0.6,-1.3) node {} -- (1.4,-0.3) [blue] node {};
   \draw (1.4,-0.3) node {} -- (0,1.5) [blue] node {};
   \put(-15,-45){$\mathbf{(8).} \hspace{0.2cm} 7$}
   \end{tikzpicture} \\ \vspace{1cm}
\begin{tikzpicture}[thick,scale=0.4,shorten >=2pt]
    \draw (0,3) node {} -- (0,1.5) [blue] node {};
    \draw (2.3,1.9) node {} -- (1.1,0.9) [blue] node {};
    \draw (2.9,-0.6) node {} -- (1.4,-0.3) [blue] node {};
    \draw (1.3,-2.7) node {} -- (0.6,-1.3) [blue] node {};
    \draw (-2.3,1.9) node {} -- (-1.1,0.9) [blue] node {};
    \draw (-2.9,-0.6) node {} -- (-1.4,-0.3) [blue] node {};
    \draw (-1.3,-2.7) node {} -- (-0.6,-1.3) [blue] node {};

   \draw (0,3) node {} -- (2.3,1.9) [blue] node {};
   \draw (2.3,1.9) node {} -- (2.9,-0.6) [blue] node {};
   \draw (2.9,-0.6) node {} -- (1.3,-2.7) [blue] node {};
   \draw (1.3,-2.7) node {} -- (-1.3,-2.7) [blue] node {};
   \draw (-2.3,1.9) node {} -- (-2.9,-0.6) [blue] node {};
   \draw (-2.9,-0.6) node {} -- (-1.3,-2.7) [blue] node {};
   \draw (-2.3,1.9) node {} -- (0,3) [blue] node {};

   \draw (0,1.5) node {} -- (-1.4,-0.3) [blue] node {};
   \draw (-1.4,-0.3) node {} -- (0.6,-1.3)  [blue] node {};
   \draw (0.6,-1.3) node {} -- (1.1,0.9) [red] node {};%
   \draw (1.1,0.9) node {} -- (-1.1,0.9) [blue] node {};
   \draw (-1.1,0.9) node {} -- (-0.6,-1.3) [blue] node {};
   \draw (-0.6,-1.3) node {} -- (1.4,-0.3) [red] node {};%
   \draw (1.4,-0.3) node {} -- (0,1.5) [blue] node {};
   \put(-15,-45){$\mathbf{(9).} \hspace{0.2cm} 7$}
   \end{tikzpicture} \hspace{1cm}
\begin{tikzpicture}[thick,scale=0.4,shorten >=2pt]
    \draw (0,3) node {} -- (0,1.5) [blue] node {};
    \draw (2.3,1.9) node {} -- (1.1,0.9)[red] node {};%
    \draw (2.9,-0.6) node {} -- (1.4,-0.3) [blue] node {};
    \draw (1.3,-2.7) node {} -- (0.6,-1.3) [blue] node {};
    \draw (-2.3,1.9) node {} -- (-1.1,0.9) [blue] node {};
    \draw (-2.9,-0.6) node {} -- (-1.4,-0.3) [blue] node {};
    \draw (-1.3,-2.7) node {} -- (-0.6,-1.3) [blue] node {};

   \draw (0,3) node {} -- (2.3,1.9) [blue] node {};
   \draw (2.3,1.9) node {} -- (2.9,-0.6) [blue] node {};
   \draw (2.9,-0.6) node {} -- (1.3,-2.7) [blue] node {};
   \draw (1.3,-2.7) node {} -- (-1.3,-2.7) [red] node {};%
   \draw (-2.3,1.9) node {} -- (-2.9,-0.6) [blue] node {};
   \draw (-2.9,-0.6) node {} -- (-1.3,-2.7) [blue] node {};
   \draw (-2.3,1.9) node {} -- (0,3) [blue] node {};

   \draw (0,1.5) node {} -- (-1.4,-0.3) [blue] node {};
   \draw (-1.4,-0.3) node {} -- (0.6,-1.3) [blue] node {};
   \draw (0.6,-1.3) node {} -- (1.1,0.9) [blue] node {};
   \draw (1.1,0.9) node {} -- (-1.1,0.9) [blue] node {};
   \draw (-1.1,0.9) node {} -- (-0.6,-1.3) [blue] node {};
   \draw (-0.6,-1.3) node {} -- (1.4,-0.3) [blue] node {};
   \draw (1.4,-0.3) node {} -- (0,1.5) [blue] node {};
   \put(-15,-45){$\mathbf{(10).} \hspace{0.2cm} 14$}
   \end{tikzpicture} \hspace{1cm}
\begin{tikzpicture}[thick,scale=0.4,shorten >=2pt]
    \draw (0,3) node {} -- (0,1.5) [blue] node {};
    \draw (2.3,1.9) node {} -- (1.1,0.9) [red] node {};
    \draw (2.9,-0.6) node {} -- (1.4,-0.3) [blue] node {};
    \draw (1.3,-2.7) node {} -- (0.6,-1.3) [blue] node {};
    \draw (-2.3,1.9) node {} -- (-1.1,0.9) [blue] node {};
    \draw (-2.9,-0.6) node {} -- (-1.4,-0.3) [blue] node {};
    \draw (-1.3,-2.7) node {} -- (-0.6,-1.3) [blue] node {};

   \draw (0,3) node {} -- (2.3,1.9) [blue] node {};
   \draw (2.3,1.9) node {} -- (2.9,-0.6) [blue] node {};
   \draw (2.9,-0.6) node {} -- (1.3,-2.7) [red] node {};
   \draw (1.3,-2.7) node {} -- (-1.3,-2.7) [blue] node {};
   \draw (-2.3,1.9) node {} -- (-2.9,-0.6) [blue] node {};
   \draw (-2.9,-0.6) node {} -- (-1.3,-2.7) [blue] node {};
   \draw (-2.3,1.9) node {} -- (0,3) [blue] node {};

   \draw (0,1.5) node {} -- (-1.4,-0.3) [blue] node {};
   \draw (-1.4,-0.3) node {} -- (0.6,-1.3) [blue] node {};
   \draw (0.6,-1.3) node {} -- (1.1,0.9) [blue] node {};
   \draw (1.1,0.9) node {} -- (-1.1,0.9) [blue] node {};
   \draw (-1.1,0.9) node {} -- (-0.6,-1.3) [blue] node {};
   \draw (-0.6,-1.3) node {} -- (1.4,-0.3) [blue] node {};
   \draw (1.4,-0.3) node {} -- (0,1.5) [blue] node {};
    \put(-15,-45){$\mathbf{(11).} \hspace{0.2cm} 7$}
   \end{tikzpicture} \hspace{1cm}
\begin{tikzpicture}[thick,scale=0.4,shorten >=2pt]
    \draw (0,3) node {} -- (0,1.5) [blue] node {};
    \draw (2.3,1.9) node {} -- (1.1,0.9) [red] node {};%
    \draw (2.9,-0.6) node {} -- (1.4,-0.3) [blue] node {};
    \draw (1.3,-2.7) node {} -- (0.6,-1.3) [blue] node {};
    \draw (-2.3,1.9) node {} -- (-1.1,0.9) [blue] node {};
    \draw (-2.9,-0.6) node {} -- (-1.4,-0.3) [blue] node {};
    \draw (-1.3,-2.7) node {} -- (-0.6,-1.3) [blue] node {};

   \draw (0,3) node {} -- (2.3,1.9) [blue] node {};
   \draw (2.3,1.9) node {} -- (2.9,-0.6) [blue] node {};
   \draw (2.9,-0.6) node {} -- (1.3,-2.7) [blue] node {};
   \draw (1.3,-2.7) node {} -- (-1.3,-2.7) [blue] node {};
   \draw (-2.3,1.9) node {} -- (-2.9,-0.6)  [blue] node {};
   \draw (-2.9,-0.6) node {} -- (-1.3,-2.7) [red] node {};%
   \draw (-2.3,1.9) node {} -- (0,3) [blue] node {};

   \draw (0,1.5) node {} -- (-1.4,-0.3) [blue] node {};
   \draw (-1.4,-0.3) node {} -- (0.6,-1.3) [blue] node {};
   \draw (0.6,-1.3) node {} -- (1.1,0.9) [blue] node {};
   \draw (1.1,0.9) node {} -- (-1.1,0.9) [blue] node {};
   \draw (-1.1,0.9) node {} -- (-0.6,-1.3) [blue] node {};
   \draw (-0.6,-1.3) node {} -- (1.4,-0.3) [blue] node {};
   \draw (1.4,-0.3) node {} -- (0,1.5) [blue] node {};
    \put(-15,-45){$\mathbf{(12).} \hspace{0.2cm} 7$}
   \end{tikzpicture} \\ \vspace{1cm}
\begin{tikzpicture}[thick,scale=0.4,shorten >=2pt]
    \draw (0,3) node {} -- (0,1.5) [blue] node {};
    \draw (2.3,1.9) node {} -- (1.1,0.9) [red] node {};%
    \draw (2.9,-0.6) node {} -- (1.4,-0.3) [blue] node {};
    \draw (1.3,-2.7) node {} -- (0.6,-1.3) [blue] node {};
    \draw (-2.3,1.9) node {} -- (-1.1,0.9) [blue] node {};
    \draw (-2.9,-0.6) node {} -- (-1.4,-0.3) [blue] node {};
    \draw (-1.3,-2.7) node {} -- (-0.6,-1.3) [blue] node {};

   \draw (0,3) node {} -- (2.3,1.9) [blue] node {};
   \draw (2.3,1.9) node {} -- (2.9,-0.6) [blue] node {};
   \draw (2.9,-0.6) node {} -- (1.3,-2.7) [blue] node {};
   \draw (1.3,-2.7) node {} -- (-1.3,-2.7) [blue] node {};
   \draw (-2.3,1.9) node {} -- (-2.9,-0.6) [blue] node {};
   \draw (-2.9,-0.6) node {} -- (-1.3,-2.7) [blue] node {};
   \draw (-2.3,1.9) node {} -- (0,3) [blue] node {};

   \draw (0,1.5) node {} -- (-1.4,-0.3) [blue] node {};
   \draw (-1.4,-0.3) node {} -- (0.6,-1.3) [blue] node {};
   \draw (0.6,-1.3) node {} -- (1.1,0.9) [blue] node {};
   \draw (1.1,0.9) node {} -- (-1.1,0.9) [blue] node {};
   \draw (-1.1,0.9) node {} -- (-0.6,-1.3) [red] node {};%
   \draw (-0.6,-1.3) node {} -- (1.4,-0.3) [blue] node {};
   \draw (1.4,-0.3) node {} -- (0,1.5) [blue] node {};
    \put(-15,-45){$\mathbf{(13).} \hspace{0.2cm} 7$}
   \end{tikzpicture} \hspace{1cm}
\begin{tikzpicture}[thick,scale=0.4,shorten >=2pt]
    \draw (0,3) node {} -- (0,1.5) [blue] node {};
    \draw (2.3,1.9) node {} -- (1.1,0.9) [red] node {};%
    \draw (2.9,-0.6) node {} -- (1.4,-0.3) [blue] node {};
    \draw (1.3,-2.7) node {} -- (0.6,-1.3) [blue] node {};
    \draw (-2.3,1.9) node {} -- (-1.1,0.9) [blue] node {};
    \draw (-2.9,-0.6) node {} -- (-1.4,-0.3) [blue] node {};
    \draw (-1.3,-2.7) node {} -- (-0.6,-1.3) [blue] node {};

   \draw (0,3) node {} -- (2.3,1.9) [blue] node {};
   \draw (2.3,1.9) node {} -- (2.9,-0.6) [blue] node {};
   \draw (2.9,-0.6) node {} -- (1.3,-2.7) [blue] node {};
   \draw (1.3,-2.7) node {} -- (-1.3,-2.7) [blue] node {};
   \draw (-2.3,1.9) node {} -- (-2.9,-0.6) [blue] node {};
   \draw (-2.9,-0.6) node {} -- (-1.3,-2.7) [blue] node {};
   \draw (-2.3,1.9) node {} -- (0,3) [blue] node {};

   \draw (0,1.5) node {} -- (-1.4,-0.3) [blue] node {};
   \draw (-1.4,-0.3) node {} -- (0.6,-1.3) [blue] node {};
   \draw (0.6,-1.3) node {} -- (1.1,0.9) [blue] node {};
   \draw (1.1,0.9) node {} -- (-1.1,0.9) [blue] node {};
   \draw (-1.1,0.9) node {} -- (-0.6,-1.3) [blue] node {};
   \draw (-0.6,-1.3) node {} -- (1.4,-0.3) [blue] node {};
   \draw (1.4,-0.3) node {} -- (0,1.5) [red] node {};%
    \put(-15,-45){$\mathbf{(14).} \hspace{0.2cm} 7$}
   \end{tikzpicture} \hspace{1cm}
\begin{tikzpicture}[thick,scale=0.4,shorten >=2pt]
    \draw (0,3) node {} -- (0,1.5) [blue] node {};
    \draw (2.3,1.9) node {} -- (1.1,0.9) [red] node {};%
    \draw (2.9,-0.6) node {} -- (1.4,-0.3) [blue] node {};
    \draw (1.3,-2.7) node {} -- (0.6,-1.3) [blue] node {};
    \draw (-2.3,1.9) node {} -- (-1.1,0.9) [blue] node {};
    \draw (-2.9,-0.6) node {} -- (-1.4,-0.3) [blue] node {};
    \draw (-1.3,-2.7) node {} -- (-0.6,-1.3) [blue] node {};

   \draw (0,3) node {} -- (2.3,1.9) [blue] node {};
   \draw (2.3,1.9) node {} -- (2.9,-0.6) [blue] node {};
   \draw (2.9,-0.6) node {} -- (1.3,-2.7) [blue] node {};
   \draw (1.3,-2.7) node {} -- (-1.3,-2.7) [blue] node {};
   \draw (-2.3,1.9) node {} -- (-2.9,-0.6) [blue] node {};
   \draw (-2.9,-0.6) node {} -- (-1.3,-2.7) [blue] node {};
   \draw (-2.3,1.9) node {} -- (0,3) [blue] node {};

   \draw (0,1.5) node {} -- (-1.4,-0.3) [blue] node {};
   \draw (-1.4,-0.3) node {} -- (0.6,-1.3) [blue] node {};
   \draw (0.6,-1.3) node {} -- (1.1,0.9) [blue] node {};
   \draw (1.1,0.9) node {} -- (-1.1,0.9) [blue] node {};
   \draw (-1.1,0.9) node {} -- (-0.6,-1.3) [blue] node {};
   \draw (-0.6,-1.3) node {} -- (1.4,-0.3) [red] node {};%
   \draw (1.4,-0.3) node {} -- (0,1.5) [blue] node {};
    \put(-15,-45){$\mathbf{(15).} \hspace{0.2cm} 14$}
   \end{tikzpicture} \hspace{1cm}
\begin{tikzpicture}[thick,scale=0.4,shorten >=2pt]
    \draw (0,3) node {} -- (0,1.5) [blue] node {};
    \draw (2.3,1.9) node {} -- (1.1,0.9) [blue] node {};
    \draw (2.9,-0.6) node {} -- (1.4,-0.3) [blue] node {};
    \draw (1.3,-2.7) node {} -- (0.6,-1.3) [blue] node {};
    \draw (-2.3,1.9) node {} -- (-1.1,0.9) [blue] node {};
    \draw (-2.9,-0.6) node {} -- (-1.4,-0.3) [blue] node {};
    \draw (-1.3,-2.7) node {} -- (-0.6,-1.3) [blue] node {};

   \draw (0,3) node {} -- (2.3,1.9) [blue] node {};
   \draw (2.3,1.9) node {} -- (2.9,-0.6) [red] node {};%
   \draw (2.9,-0.6) node {} -- (1.3,-2.7) [blue] node {};
   \draw (1.3,-2.7) node {} -- (-1.3,-2.7) [red] node {};%
   \draw (-2.3,1.9) node {} -- (-2.9,-0.6) [red] node {};%
   \draw (-2.9,-0.6) node {} -- (-1.3,-2.7) [blue] node {};
   \draw (-2.3,1.9) node {} -- (0,3) [blue] node {};

   \draw (0,1.5) node {} -- (-1.4,-0.3) [blue] node {};
   \draw (-1.4,-0.3) node {} -- (0.6,-1.3) [blue] node {};
   \draw (0.6,-1.3) node {} -- (1.1,0.9) [blue] node {};
   \draw (1.1,0.9) node {} -- (-1.1,0.9) [blue] node {};
   \draw (-1.1,0.9) node {} -- (-0.6,-1.3) [blue] node {};
   \draw (-0.6,-1.3) node {} -- (1.4,-0.3) [blue] node {};
   \draw (1.4,-0.3) node {} -- (0,1.5) [blue] node {};
    \put(-15,-45){$\mathbf{(16).} \hspace{0.2cm} 7$}
   \end{tikzpicture} \\ \vspace{1cm}
\begin{tikzpicture}[thick,scale=0.4,shorten >=2pt]
    \draw (0,3) node {} -- (0,1.5) [red] node {};%
    \draw (2.3,1.9) node {} -- (1.1,0.9) [blue] node {};
    \draw (2.9,-0.6) node {} -- (1.4,-0.3) [blue] node {};
    \draw (1.3,-2.7) node {} -- (0.6,-1.3) [blue] node {};
    \draw (-2.3,1.9) node {} -- (-1.1,0.9) [blue] node {};
    \draw (-2.9,-0.6) node {} -- (-1.4,-0.3) [blue] node {};
    \draw (-1.3,-2.7) node {} -- (-0.6,-1.3) [blue] node {};

   \draw (0,3) node {} -- (2.3,1.9) [blue] node {};
   \draw (2.3,1.9) node {} -- (2.9,-0.6) [red] node {};%
   \draw (2.9,-0.6) node {} -- (1.3,-2.7) [blue] node {};
   \draw (1.3,-2.7) node {} -- (-1.3,-2.7) [red] node {};%
   \draw (-2.3,1.9) node {} -- (-2.9,-0.6) [blue] node {};
   \draw (-2.9,-0.6) node {} -- (-1.3,-2.7) [blue] node {};
   \draw (-2.3,1.9) node {} -- (0,3) [blue] node {};

   \draw (0,1.5) node {} -- (-1.4,-0.3) [blue] node {};
   \draw (-1.4,-0.3) node {} -- (0.6,-1.3) [blue] node {};
   \draw (0.6,-1.3) node {} -- (1.1,0.9) [blue] node {};
   \draw (1.1,0.9) node {} -- (-1.1,0.9) [blue] node {};
   \draw (-1.1,0.9) node {} -- (-0.6,-1.3) [blue] node {};
   \draw (-0.6,-1.3) node {} -- (1.4,-0.3) [blue] node {};
   \draw (1.4,-0.3) node {} -- (0,1.5) [blue] node {};
    \put(-15,-45){$\mathbf{(17).} \hspace{0.2cm} 14$}
   \end{tikzpicture} \hspace{1cm}
\begin{tikzpicture}[thick,scale=0.4,shorten >=2pt]
    \draw (0,3) node {} -- (0,1.5) [blue] node {};
    \draw (2.3,1.9) node {} -- (1.1,0.9) [blue] node {};
    \draw (2.9,-0.6) node {} -- (1.4,-0.3) [blue] node {};
    \draw (1.3,-2.7) node {} -- (0.6,-1.3) [blue] node {};
    \draw (-2.3,1.9) node {} -- (-1.1,0.9) [red] node {};
    \draw (-2.9,-0.6) node {} -- (-1.4,-0.3) [blue] node {};
    \draw (-1.3,-2.7) node {} -- (-0.6,-1.3) [blue] node {};

   \draw (0,3) node {} -- (2.3,1.9) [blue] node {};
   \draw (2.3,1.9) node {} -- (2.9,-0.6) [red] node {};
   \draw (2.9,-0.6) node {} -- (1.3,-2.7) [blue] node {};
   \draw (1.3,-2.7) node {} -- (-1.3,-2.7) [red] node {};
   \draw (-2.3,1.9) node {} -- (-2.9,-0.6) [blue] node {};
   \draw (-2.9,-0.6) node {} -- (-1.3,-2.7) [blue] node {};
   \draw (-2.3,1.9) node {} -- (0,3) [blue] node {};

   \draw (0,1.5) node {} -- (-1.4,-0.3) [blue] node {};
   \draw (-1.4,-0.3) node {} -- (0.6,-1.3) [blue] node {};
   \draw (0.6,-1.3) node {} -- (1.1,0.9) [blue] node {};
   \draw (1.1,0.9) node {} -- (-1.1,0.9) [blue] node {};
   \draw (-1.1,0.9) node {} -- (-0.6,-1.3) [blue] node {};
   \draw (-0.6,-1.3) node {} -- (1.4,-0.3) [blue] node {};
   \draw (1.4,-0.3) node {} -- (0,1.5) [blue] node {};
   \put(-15,-45){$\mathbf{(18).} \hspace{0.2cm} 7$}
   \end{tikzpicture} \hspace{1cm}
\begin{tikzpicture}[thick,scale=0.4,shorten >=2pt]
    \draw (0,3) node {} -- (0,1.5) [blue] node {};
    \draw (2.3,1.9) node {} -- (1.1,0.9) [blue] node {};
    \draw (2.9,-0.6) node {} -- (1.4,-0.3) [blue] node {};
    \draw (1.3,-2.7) node {} -- (0.6,-1.3) [blue] node {};
    \draw (-2.3,1.9) node {} -- (-1.1,0.9) [blue] node {};
    \draw (-2.9,-0.6) node {} -- (-1.4,-0.3) [blue] node {};
    \draw (-1.3,-2.7) node {} -- (-0.6,-1.3) [blue] node {};

   \draw (0,3) node {} -- (2.3,1.9) [blue] node {};
   \draw (2.3,1.9) node {} -- (2.9,-0.6) [red] node {};%
   \draw (2.9,-0.6) node {} -- (1.3,-2.7) [blue] node {};
   \draw (1.3,-2.7) node {} -- (-1.3,-2.7) [red] node {};%
   \draw (-2.3,1.9) node {} -- (-2.9,-0.6) [blue] node {};
   \draw (-2.9,-0.6) node {} -- (-1.3,-2.7) [blue] node {};
   \draw (-2.3,1.9) node {} -- (0,3) [blue] node {};

   \draw (0,1.5) node {} -- (-1.4,-0.3) [red] node {};%
   \draw (-1.4,-0.3) node {} -- (0.6,-1.3) [blue] node {};
   \draw (0.6,-1.3) node {} -- (1.1,0.9) [blue] node {};
   \draw (1.1,0.9) node {} -- (-1.1,0.9) [blue] node {};
   \draw (-1.1,0.9) node {} -- (-0.6,-1.3) [blue] node {};
   \draw (-0.6,-1.3) node {} -- (1.4,-0.3) [blue] node {};
   \draw (1.4,-0.3) node {} -- (0,1.5) [blue] node {};
    \put(-15,-45){$\mathbf{(19).} \hspace{0.2cm} 7$}
   \end{tikzpicture} \hspace{1cm}
\begin{tikzpicture}[thick,scale=0.4,shorten >=2pt]
    \draw (0,3) node {} -- (0,1.5) [blue] node {};
    \draw (2.3,1.9) node {} -- (1.1,0.9) [blue] node {};
    \draw (2.9,-0.6) node {} -- (1.4,-0.3) [blue] node {};
    \draw (1.3,-2.7) node {} -- (0.6,-1.3) [blue] node {};
    \draw (-2.3,1.9) node {} -- (-1.1,0.9) [blue] node {};
    \draw (-2.9,-0.6) node {} -- (-1.4,-0.3) [blue] node {};
    \draw (-1.3,-2.7) node {} -- (-0.6,-1.3) [blue] node {};

   \draw (0,3) node {} -- (2.3,1.9) [blue] node {};
   \draw (2.3,1.9) node {} -- (2.9,-0.6) [red] node {};%
   \draw (2.9,-0.6) node {} -- (1.3,-2.7) [blue] node {};
   \draw (1.3,-2.7) node {} -- (-1.3,-2.7) [red] node {};%
   \draw (-2.3,1.9) node {} -- (-2.9,-0.6) [blue] node {};
   \draw (-2.9,-0.6) node {} -- (-1.3,-2.7) [blue] node {};
   \draw (-2.3,1.9) node {} -- (0,3) [blue] node {};

   \draw (0,1.5) node {} -- (-1.4,-0.3) [blue] node {};
   \draw (-1.4,-0.3) node {} -- (0.6,-1.3) [blue] node {};
   \draw (0.6,-1.3) node {} -- (1.1,0.9) [blue] node {};
   \draw (1.1,0.9) node {} -- (-1.1,0.9) [blue] node {};
   \draw (-1.1,0.9) node {} -- (-0.6,-1.3) [red] node {};%
   \draw (-0.6,-1.3) node {} -- (1.4,-0.3) [blue] node {};
   \draw (1.4,-0.3) node {} -- (0,1.5) [blue] node {};
    \put(-15,-45){$\mathbf{(20).} \hspace{0.2cm} 7$}
   \end{tikzpicture} \\ \vspace{1cm}
\begin{tikzpicture}[thick,scale=0.4,shorten >=2pt]
    \draw (0,3) node {} -- (0,1.5) [blue] node {};
    \draw (2.3,1.9) node {} -- (1.1,0.9) [blue] node {};
    \draw (2.9,-0.6) node {} -- (1.4,-0.3) [blue] node {};
    \draw (1.3,-2.7) node {} -- (0.6,-1.3) [blue] node {};
    \draw (-2.3,1.9) node {} -- (-1.1,0.9) [blue] node {};
    \draw (-2.9,-0.6) node {} -- (-1.4,-0.3) [blue] node {};
    \draw (-1.3,-2.7) node {} -- (-0.6,-1.3) [blue] node {};

   \draw (0,3) node {} -- (2.3,1.9) [blue] node {};
   \draw (2.3,1.9) node {} -- (2.9,-0.6) [blue] node {};
   \draw (2.9,-0.6) node {} -- (1.3,-2.7) [blue] node {};
   \draw (1.3,-2.7) node {} -- (-1.3,-2.7) [blue] node {};
   \draw (-2.3,1.9) node {} -- (-2.9,-0.6) [blue] node {};
   \draw (-2.9,-0.6) node {} -- (-1.3,-2.7) [blue] node {};
   \draw (-2.3,1.9) node {} -- (0,3) [blue] node {};

   \draw (0,1.5) node {} -- (-1.4,-0.3) [blue] node {};
   \draw (-1.4,-0.3) node {} -- (0.6,-1.3) [red] node {};%
   \draw (0.6,-1.3) node {} -- (1.1,0.9) [blue] node {};
   \draw (1.1,0.9) node {} -- (-1.1,0.9) [red] node {};%
   \draw (-1.1,0.9) node {} -- (-0.6,-1.3) [blue] node {};
   \draw (-0.6,-1.3) node {} -- (1.4,-0.3) [red] node {};%
   \draw (1.4,-0.3) node {} -- (0,1.5) [blue] node {};
    \put(-15,-45){$\mathbf{(21).} \hspace{0.2cm} 7$}
   \end{tikzpicture} \hspace{1cm}
\begin{tikzpicture}[thick,scale=0.4,shorten >=2pt]
    \draw (0,3) node {} -- (0,1.5) [blue] node {};
    \draw (2.3,1.9) node {} -- (1.1,0.9) [blue] node {};
    \draw (2.9,-0.6) node {} -- (1.4,-0.3) [blue] node {};
    \draw (1.3,-2.7) node {} -- (0.6,-1.3) [blue] node {};
    \draw (-2.3,1.9) node {} -- (-1.1,0.9) [blue] node {};
    \draw (-2.9,-0.6) node {} -- (-1.4,-0.3) [blue] node {};
    \draw (-1.3,-2.7) node {} -- (-0.6,-1.3) [blue] node {};

   \draw (0,3) node {} -- (2.3,1.9) [red] node {};%
   \draw (2.3,1.9) node {} -- (2.9,-0.6) [blue] node {};
   \draw (2.9,-0.6) node {} -- (1.3,-2.7) [red] node {};%
   \draw (1.3,-2.7) node {} -- (-1.3,-2.7) [blue] node {};
   \draw (-2.3,1.9) node {} -- (-2.9,-0.6) [blue] node {};
   \draw (-2.9,-0.6) node {} -- (-1.3,-2.7) [red] node {};%
   \draw (-2.3,1.9) node {} -- (0,3) [blue] node {};

   \draw (0,1.5) node {} -- (-1.4,-0.3) [red] node {};%
   \draw (-1.4,-0.3) node {} -- (0.6,-1.3) [blue] node {};
   \draw (0.6,-1.3) node {} -- (1.1,0.9) [blue] node {};
   \draw (1.1,0.9) node {} -- (-1.1,0.9) [blue] node {};
   \draw (-1.1,0.9) node {} -- (-0.6,-1.3) [blue] node {};
   \draw (-0.6,-1.3) node {} -- (1.4,-0.3) [blue] node {};
   \draw (1.4,-0.3) node {} -- (0,1.5) [blue] node {};
    \put(-15,-45){$\mathbf{(22).} \hspace{0.2cm} 1$}
   \end{tikzpicture} \hspace{1cm}
\begin{tikzpicture}[thick,scale=0.4,shorten >=2pt]
    \draw (0,3) node {} -- (0,1.5) [red] node {};%
    \draw (2.3,1.9) node {} -- (1.1,0.9) [blue] node {};
    \draw (2.9,-0.6) node {} -- (1.4,-0.3) [blue] node {};
    \draw (1.3,-2.7) node {} -- (0.6,-1.3) [blue] node {};
    \draw (-2.3,1.9) node {} -- (-1.1,0.9)  [blue] node {};
    \draw (-2.9,-0.6) node {} -- (-1.4,-0.3) [red] node {};%
    \draw (-1.3,-2.7) node {} -- (-0.6,-1.3) [blue] node {};

   \draw (0,3) node {} -- (2.3,1.9) [blue] node {};
   \draw (2.3,1.9) node {} -- (2.9,-0.6) [red] node {};%
   \draw (2.9,-0.6) node {} -- (1.3,-2.7) [blue] node {};
   \draw (1.3,-2.7) node {} -- (-1.3,-2.7) [blue] node {};
   \draw (-2.3,1.9) node {} -- (-2.9,-0.6) [blue] node {};
   \draw (-2.9,-0.6) node {} -- (-1.3,-2.7) [blue] node {};
   \draw (-2.3,1.9) node {} -- (0,3) [blue] node {};

   \draw (0,1.5) node {} -- (-1.4,-0.3) [blue] node {};
   \draw (-1.4,-0.3) node {} -- (0.6,-1.3) [blue] node {};
   \draw (0.6,-1.3) node {} -- (1.1,0.9) [blue] node {};
   \draw (1.1,0.9) node {} -- (-1.1,0.9) [blue] node {};
   \draw (-1.1,0.9) node {} -- (-0.6,-1.3) [blue] node {};
   \draw (-0.6,-1.3) node {} -- (1.4,-0.3) [blue] node {};
   \draw (1.4,-0.3) node {} -- (0,1.5) [blue] node {};
    \put(-15,-45){$\mathbf{(23).} \hspace{0.2cm} 7$}
   \end{tikzpicture} \hspace{1cm}
\begin{tikzpicture}[thick,scale=0.4,shorten >=2pt]
    \draw (0,3) node {} -- (0,1.5) [blue] node {};
    \draw (2.3,1.9) node {} -- (1.1,0.9) [blue] node {};
    \draw (2.9,-0.6) node {} -- (1.4,-0.3) [blue] node {};
    \draw (1.3,-2.7) node {} -- (0.6,-1.3) [red] node {}; %
    \draw (-2.3,1.9) node {} -- (-1.1,0.9) [blue] node {};
    \draw (-2.9,-0.6) node {} -- (-1.4,-0.3) [red] node {};%
    \draw (-1.3,-2.7) node {} -- (-0.6,-1.3) [blue] node {};

   \draw (0,3) node {} -- (2.3,1.9) [red] node {};%
   \draw (2.3,1.9) node {} -- (2.9,-0.6) [blue] node {};
   \draw (2.9,-0.6) node {} -- (1.3,-2.7) [blue] node {};
   \draw (1.3,-2.7) node {} -- (-1.3,-2.7) [blue] node {};
   \draw (-2.3,1.9) node {} -- (-2.9,-0.6) [blue] node {};
   \draw (-2.9,-0.6) node {} -- (-1.3,-2.7) [blue] node {};
   \draw (-2.3,1.9) node {} -- (0,3) [blue] node {};

   \draw (0,1.5) node {} -- (-1.4,-0.3) [blue] node {};
   \draw (-1.4,-0.3) node {} -- (0.6,-1.3) [blue] node {};
   \draw (0.6,-1.3) node {} -- (1.1,0.9) [blue] node {};
   \draw (1.1,0.9) node {} -- (-1.1,0.9) [blue] node {};
   \draw (-1.1,0.9) node {} -- (-0.6,-1.3) [blue] node {};
   \draw (-0.6,-1.3) node {} -- (1.4,-0.3) [blue] node {};
   \draw (1.4,-0.3) node {} -- (0,1.5) [blue] node {};
   \put(-15,-45){$\mathbf{(24).} \hspace{0.2cm} 7$}
   \end{tikzpicture} \\ \vspace{1cm}
\begin{tikzpicture}[thick,scale=0.4,shorten >=2pt]
    \draw (0,3) node {} -- (0,1.5) [blue] node {};
    \draw (2.3,1.9) node {} -- (1.1,0.9) [red] node {};%
    \draw (2.9,-0.6) node {} -- (1.4,-0.3) [blue] node {};
    \draw (1.3,-2.7) node {} -- (0.6,-1.3) [blue] node {};
    \draw (-2.3,1.9) node {} -- (-1.1,0.9) [blue] node {};
    \draw (-2.9,-0.6) node {} -- (-1.4,-0.3) [blue] node {};
    \draw (-1.3,-2.7) node {} -- (-0.6,-1.3) [blue] node {};

   \draw (0,3) node {} -- (2.3,1.9) [blue] node {};
   \draw (2.3,1.9) node {} -- (2.9,-0.6) [blue] node {};
   \draw (2.9,-0.6) node {} -- (1.3,-2.7) [blue] node {};
   \draw (1.3,-2.7) node {} -- (-1.3,-2.7) [red] node {};%
   \draw (-2.3,1.9) node {} -- (-2.9,-0.6) [blue] node {};
   \draw (-2.9,-0.6) node {} -- (-1.3,-2.7) [blue] node {};
   \draw (-2.3,1.9) node {} -- (0,3) [blue] node {};

   \draw (0,1.5) node {} -- (-1.4,-0.3) [blue] node {};
   \draw (-1.4,-0.3) node {} -- (0.6,-1.3) [blue] node {};
   \draw (0.6,-1.3) node {} -- (1.1,0.9) [blue] node {};
   \draw (1.1,0.9) node {} -- (-1.1,0.9) [blue] node {};
   \draw (-1.1,0.9) node {} -- (-0.6,-1.3) [blue] node {};
   \draw (-0.6,-1.3) node {} -- (1.4,-0.3) [blue] node {};
   \draw (1.4,-0.3) node {} -- (0,1.5) [red] node {};%
   \put(-15,-45){$\mathbf{(25).} \hspace{0.2cm} 7$}
   \end{tikzpicture} \hspace{1cm}
\begin{tikzpicture}[thick,scale=0.4,shorten >=2pt]
    \draw (0,3) node {} -- (0,1.5) [blue] node {};
    \draw (2.3,1.9) node {} -- (1.1,0.9) [red] node {};%
    \draw (2.9,-0.6) node {} -- (1.4,-0.3) [blue] node {};
    \draw (1.3,-2.7) node {} -- (0.6,-1.3) [blue] node {};
    \draw (-2.3,1.9) node {} -- (-1.1,0.9) [blue] node {};
    \draw (-2.9,-0.6) node {} -- (-1.4,-0.3) [blue] node {};
    \draw (-1.3,-2.7) node {} -- (-0.6,-1.3) [blue] node {};

   \draw (0,3) node {} -- (2.3,1.9) [blue] node {};
   \draw (2.3,1.9) node {} -- (2.9,-0.6) [blue] node {};
   \draw (2.9,-0.6) node {} -- (1.3,-2.7) [red] node {};%
   \draw (1.3,-2.7) node {} -- (-1.3,-2.7) [blue] node {};
   \draw (-2.3,1.9) node {} -- (-2.9,-0.6) [blue] node {};
   \draw (-2.9,-0.6) node {} -- (-1.3,-2.7) [blue] node {};
   \draw (-2.3,1.9) node {} -- (0,3) [blue] node {};

   \draw (0,1.5) node {} -- (-1.4,-0.3) [blue] node {};
   \draw (-1.4,-0.3) node {} -- (0.6,-1.3) [blue] node {};
   \draw (0.6,-1.3) node {} -- (1.1,0.9) [blue] node {};
   \draw (1.1,0.9) node {} -- (-1.1,0.9) [blue] node {};
   \draw (-1.1,0.9) node {} -- (-0.6,-1.3) [red] node {};%
   \draw (-0.6,-1.3) node {} -- (1.4,-0.3) [blue] node {};
   \draw (1.4,-0.3) node {} -- (0,1.5) [blue] node {};
   \put(-15,-45){$\mathbf{(26).} \hspace{0.2cm} 7$}
   \end{tikzpicture} \hspace{1cm}
\begin{tikzpicture}[thick,scale=0.4,shorten >=2pt]
    \draw (0,3) node {} -- (0,1.5) [blue] node {};
    \draw (2.3,1.9) node {} -- (1.1,0.9) [red] node {};%
    \draw (2.9,-0.6) node {} -- (1.4,-0.3) [blue] node {};
    \draw (1.3,-2.7) node {} -- (0.6,-1.3) [blue] node {};
    \draw (-2.3,1.9) node {} -- (-1.1,0.9) [blue] node {};
    \draw (-2.9,-0.6) node {} -- (-1.4,-0.3) [blue] node {};
    \draw (-1.3,-2.7) node {} -- (-0.6,-1.3) [blue] node {};

   \draw (0,3) node {} -- (2.3,1.9) [blue] node {};
   \draw (2.3,1.9) node {} -- (2.9,-0.6) [blue] node {};
   \draw (2.9,-0.6) node {} -- (1.3,-2.7) [blue] node {};
   \draw (1.3,-2.7) node {} -- (-1.3,-2.7) [red] node {};%
   \draw (-2.3,1.9) node {} -- (-2.9,-0.6) [blue] node {};
   \draw (-2.9,-0.6) node {} -- (-1.3,-2.7) [blue] node {};
   \draw (-2.3,1.9) node {} -- (0,3) [blue] node {};

   \draw (0,1.5) node {} -- (-1.4,-0.3) [blue] node {};
   \draw (-1.4,-0.3) node {} -- (0.6,-1.3) [blue] node {};
   \draw (0.6,-1.3) node {} -- (1.1,0.9) [blue] node {};
   \draw (1.1,0.9) node {} -- (-1.1,0.9) [blue] node {};
   \draw (-1.1,0.9) node {} -- (-0.6,-1.3) [red] node {};%
   \draw (-0.6,-1.3) node {} -- (1.4,-0.3) [blue] node {};
   \draw (1.4,-0.3) node {} -- (0,1.5) [blue] node {};
   \put(-15,-45){$\mathbf{(27).} \hspace{0.2cm} 7$}
      \end{tikzpicture} \hspace{1cm}
\begin{tikzpicture}[thick,scale=0.4,shorten >=2pt]
    \draw (0,3) node {} -- (0,1.5) [blue] node {};
    \draw (2.3,1.9) node {} -- (1.1,0.9) [red] node {};%
    \draw (2.9,-0.6) node {} -- (1.4,-0.3) [blue] node {};
    \draw (1.3,-2.7) node {} -- (0.6,-1.3) [blue] node {};
    \draw (-2.3,1.9) node {} -- (-1.1,0.9) [blue] node {};
    \draw (-2.9,-0.6) node {} -- (-1.4,-0.3) [blue] node {};
    \draw (-1.3,-2.7) node {} -- (-0.6,-1.3) [blue] node {};

   \draw (0,3) node {} -- (2.3,1.9) [blue] node {};
   \draw (2.3,1.9) node {} -- (2.9,-0.6) [blue] node {};
   \draw (2.9,-0.6) node {} -- (1.3,-2.7) [blue] node {};
   \draw (1.3,-2.7) node {} -- (-1.3,-2.7) [red] node {};%
   \draw (-2.3,1.9) node {} -- (-2.9,-0.6) [blue] node {};
   \draw (-2.9,-0.6) node {} -- (-1.3,-2.7) [blue] node {};
   \draw (-2.3,1.9) node {} -- (0,3) [blue] node {};

   \draw (0,1.5) node {} -- (-1.4,-0.3) [red] node {};%
   \draw (-1.4,-0.3) node {} -- (0.6,-1.3) [blue] node {};
   \draw (0.6,-1.3) node {} -- (1.1,0.9) [blue] node {};
   \draw (1.1,0.9) node {} -- (-1.1,0.9) [blue] node {};
   \draw (-1.1,0.9) node {} -- (-0.6,-1.3) [blue] node {};
   \draw (-0.6,-1.3) node {} -- (1.4,-0.3) [blue] node {};
   \draw (1.4,-0.3) node {} -- (0,1.5) [blue] node {};
   \put(-15,-45){$\mathbf{(28).} \hspace{0.2cm} 7$}
      \end{tikzpicture} \\ \vspace{1cm}
\begin{tikzpicture}[thick,scale=0.4,shorten >=2pt]
    \draw (0,3) node {} -- (0,1.5) [blue] node {};
    \draw (2.3,1.9) node {} -- (1.1,0.9) [blue] node {};
    \draw (2.9,-0.6) node {} -- (1.4,-0.3) [blue] node {};
    \draw (1.3,-2.7) node {} -- (0.6,-1.3) [blue] node {};
    \draw (-2.3,1.9) node {} -- (-1.1,0.9)  [blue] node {};
    \draw (-2.9,-0.6) node {} -- (-1.4,-0.3) [red] node {};%
    \draw (-1.3,-2.7) node {} -- (-0.6,-1.3) [blue] node {};

   \draw (0,3) node {} -- (2.3,1.9) [red] node {};%
   \draw (2.3,1.9) node {} -- (2.9,-0.6) [blue] node {};
   \draw (2.9,-0.6) node {} -- (1.3,-2.7) [red] node {};%
   \draw (1.3,-2.7) node {} -- (-1.3,-2.7) [blue] node {};
   \draw (-2.3,1.9) node {} -- (-2.9,-0.6) [blue] node {};
   \draw (-2.9,-0.6) node {} -- (-1.3,-2.7) [blue] node {};
   \draw (-2.3,1.9) node {} -- (0,3) [blue] node {};

   \draw (0,1.5) node {} -- (-1.4,-0.3) [blue] node {};
   \draw (-1.4,-0.3) node {} -- (0.6,-1.3) [blue] node {};
   \draw (0.6,-1.3) node {} -- (1.1,0.9) [blue] node {};
   \draw (1.1,0.9) node {} -- (-1.1,0.9) [blue] node {};
   \draw (-1.1,0.9) node {} -- (-0.6,-1.3) [red] node {};%
   \draw (-0.6,-1.3) node {} -- (1.4,-0.3) [blue] node {};
   \draw (1.4,-0.3) node {} -- (0,1.5) [blue] node {};
   \put(-15,-45){$\mathbf{(29).} \hspace{0.2cm} 1$}
   \end{tikzpicture} \hspace{1cm}
\begin{tikzpicture}[thick,scale=0.4,shorten >=2pt]
    \draw (0,3) node {} -- (0,1.5) [red] node {};
    \draw (2.3,1.9) node {} -- (1.1,0.9) [blue] node {};
    \draw (2.9,-0.6) node {} -- (1.4,-0.3) [blue] node {};
    \draw (1.3,-2.7) node {} -- (0.6,-1.3) [blue] node {};
    \draw (-2.3,1.9) node {} -- (-1.1,0.9) [blue] node {};
    \draw (-2.9,-0.6) node {} -- (-1.4,-0.3) [blue] node {};
    \draw (-1.3,-2.7) node {} -- (-0.6,-1.3) [blue] node {};

   \draw (0,3) node {} -- (2.3,1.9) [blue] node {};
   \draw (2.3,1.9) node {} -- (2.9,-0.6) [red] node {};%
   \draw (2.9,-0.6) node {} -- (1.3,-2.7) [blue] node {};
   \draw (1.3,-2.7) node {} -- (-1.3,-2.7) [blue] node {};
   \draw (-2.3,1.9) node {} -- (-2.9,-0.6) [red] node {};%
   \draw (-2.9,-0.6) node {} -- (-1.3,-2.7) [blue] node {};
   \draw (-2.3,1.9) node {} -- (0,3) [blue] node {};

   \draw (0,1.5) node {} -- (-1.4,-0.3) [blue] node {};
   \draw (-1.4,-0.3) node {} -- (0.6,-1.3) [red] node {};%
   \draw (0.6,-1.3) node {} -- (1.1,0.9) [blue] node {};
   \draw (1.1,0.9) node {} -- (-1.1,0.9) [blue] node {};
   \draw (-1.1,0.9) node {} -- (-0.6,-1.3) [blue] node {};
   \draw (-0.6,-1.3) node {} -- (1.4,-0.3) [blue] node {};
   \draw (1.4,-0.3) node {} -- (0,1.5) [blue] node {};
   \put(-15,-45){$\mathbf{(30).} \hspace{0.2cm} 7$}
   \end{tikzpicture} \hspace{1cm}
\begin{tikzpicture}[thick,scale=0.4,shorten >=2pt]
    \draw (0,3) node {} -- (0,1.5) [blue] node {};
    \draw (2.3,1.9) node {} -- (1.1,0.9) [blue] node {};
    \draw (2.9,-0.6) node {} -- (1.4,-0.3) [blue] node {};
    \draw (1.3,-2.7) node {} -- (0.6,-1.3) [blue] node {};
    \draw (-2.3,1.9) node {} -- (-1.1,0.9) [blue] node {};
    \draw (-2.9,-0.6) node {} -- (-1.4,-0.3) [blue] node {};
    \draw (-1.3,-2.7) node {} -- (-0.6,-1.3) [blue] node {};

   \draw (0,3) node {} -- (2.3,1.9) [blue] node {};
   \draw (2.3,1.9) node {} -- (2.9,-0.6) [blue] node {};
   \draw (2.9,-0.6) node {} -- (1.3,-2.7) [blue] node {};
   \draw (1.3,-2.7) node {} -- (-1.3,-2.7) [blue] node {};
   \draw (-2.3,1.9) node {} -- (-2.9,-0.6) [blue] node {};
   \draw (-2.9,-0.6) node {} -- (-1.3,-2.7) [blue] node {};
   \draw (-2.3,1.9) node {} -- (0,3) [red] node {};%

   \draw (0,1.5) node {} -- (-1.4,-0.3) [blue] node {};
   \draw (-1.4,-0.3) node {} -- (0.6,-1.3) [blue] node {};
   \draw (0.6,-1.3) node {} -- (1.1,0.9) [blue] node {};
   \draw (1.1,0.9) node {} -- (-1.1,0.9) [blue] node {};
   \draw (-1.1,0.9) node {} -- (-0.6,-1.3) [blue] node {};
   \draw (-0.6,-1.3) node {} -- (1.4,-0.3) [red] node {};%
   \draw (1.4,-0.3) node {} -- (0,1.5) [blue] node {};
     \put(-15,-45){$\mathbf{(31).} \hspace{0.2cm} 7$}
   \end{tikzpicture} \hspace{1cm}
\begin{tikzpicture}[thick,scale=0.4,shorten >=2pt]
    \draw (0,3) node {} -- (0,1.5) [blue] node {};
    \draw (2.3,1.9) node {} -- (1.1,0.9) [blue] node {};
    \draw (2.9,-0.6) node {} -- (1.4,-0.3) [blue] node {};
    \draw (1.3,-2.7) node {} -- (0.6,-1.3) [blue] node {};
    \draw (-2.3,1.9) node {} -- (-1.1,0.9) [blue] node {};
    \draw (-2.9,-0.6) node {} -- (-1.4,-0.3) [blue] node {};
    \draw (-1.3,-2.7) node {} -- (-0.6,-1.3) [blue] node {};

   \draw (0,3) node {} -- (2.3,1.9) [blue] node {};
   \draw (2.3,1.9) node {} -- (2.9,-0.6) [red] node {};%
   \draw (2.9,-0.6) node {} -- (1.3,-2.7) [blue] node {};
   \draw (1.3,-2.7) node {} -- (-1.3,-2.7) [blue] node {};
   \draw (-2.3,1.9) node {} -- (-2.9,-0.6) [blue] node {};
   \draw (-2.9,-0.6) node {} -- (-1.3,-2.7) [red] node {};%
   \draw (-2.3,1.9) node {} -- (0,3) [blue] node {};

   \draw (0,1.5) node {} -- (-1.4,-0.3) [red] node {};%
   \draw (-1.4,-0.3) node {} -- (0.6,-1.3) [blue] node {};
   \draw (0.6,-1.3) node {} -- (1.1,0.9) [blue] node {};
   \draw (1.1,0.9) node {} -- (-1.1,0.9) [blue] node {};
   \draw (-1.1,0.9) node {} -- (-0.6,-1.3) [blue] node {};
   \draw (-0.6,-1.3) node {} -- (1.4,-0.3) [blue] node {};
   \draw (1.4,-0.3) node {} -- (0,1.5) [blue] node {};
     \put(-15,-45){$\mathbf{(32).} \hspace{0.2cm} 7$}
   \end{tikzpicture} \\ \vspace{1cm}
\begin{tikzpicture}[thick,scale=0.4,shorten >=2pt]
    \draw (0,3) node {} -- (0,1.5) [blue] node {};
    \draw (2.3,1.9) node {} -- (1.1,0.9) [blue] node {};
    \draw (2.9,-0.6) node {} -- (1.4,-0.3) [blue] node {};
    \draw (1.3,-2.7) node {} -- (0.6,-1.3) [blue] node {};
    \draw (-2.3,1.9) node {} -- (-1.1,0.9) [blue] node {};
    \draw (-2.9,-0.6) node {} -- (-1.4,-0.3) [blue] node {};
    \draw (-1.3,-2.7) node {} -- (-0.6,-1.3) [blue] node {};

   \draw (0,3) node {} -- (2.3,1.9) [blue] node {};
   \draw (2.3,1.9) node {} -- (2.9,-0.6) [red] node {};%
   \draw (2.9,-0.6) node {} -- (1.3,-2.7) [blue] node {};
   \draw (1.3,-2.7) node {} -- (-1.3,-2.7) [blue] node {};
   \draw (-2.3,1.9) node {} -- (-2.9,-0.6) [blue] node {};
   \draw (-2.9,-0.6) node {} -- (-1.3,-2.7) [blue] node {};
   \draw (-2.3,1.9) node {} -- (0,3) [blue] node {};

   \draw (0,1.5) node {} -- (-1.4,-0.3) [red] node {};%
   \draw (-1.4,-0.3) node {} -- (0.6,-1.3) [blue] node {};
   \draw (0.6,-1.3) node {} -- (1.1,0.9) [blue] node {};
   \draw (1.1,0.9) node {} -- (-1.1,0.9) [blue] node {};
   \draw (-1.1,0.9) node {} -- (-0.6,-1.3) [red] node {};%
   \draw (-0.6,-1.3) node {} -- (1.4,-0.3) [blue] node {};
   \draw (1.4,-0.3) node {} -- (0,1.5) [blue] node {};
     \put(-15,-45){$\mathbf{(33).} \hspace{0.2cm} 7$}
   \end{tikzpicture} \hspace{1cm}
\begin{tikzpicture}[thick,scale=0.4,shorten >=2pt]
    \draw (0,3) node {} -- (0,1.5) [blue] node {};
    \draw (2.3,1.9) node {} -- (1.1,0.9) [blue] node {};
    \draw (2.9,-0.6) node {} -- (1.4,-0.3) [blue] node {};
    \draw (1.3,-2.7) node {} -- (0.6,-1.3) [blue] node {};
    \draw (-2.3,1.9) node {} -- (-1.1,0.9) [blue] node {};
    \draw (-2.9,-0.6) node {} -- (-1.4,-0.3) [blue] node {};
    \draw (-1.3,-2.7) node {} -- (-0.6,-1.3) [blue] node {};

   \draw (0,3) node {} -- (2.3,1.9) [red] node {};%
   \draw (2.3,1.9) node {} -- (2.9,-0.6) [blue] node {};
   \draw (2.9,-0.6) node {} -- (1.3,-2.7) [blue] node {};
   \draw (1.3,-2.7) node {} -- (-1.3,-2.7) [blue] node {};
   \draw (-2.3,1.9) node {} -- (-2.9,-0.6) [blue] node {};
   \draw (-2.9,-0.6) node {} -- (-1.3,-2.7) [blue] node {};
   \draw (-2.3,1.9) node {} -- (0,3) [blue] node {};

   \draw (0,1.5) node {} -- (-1.4,-0.3) [blue] node {};
   \draw (-1.4,-0.3) node {} -- (0.6,-1.3) [blue] node {};
   \draw (0.6,-1.3) node {} -- (1.1,0.9) [blue] node {};
   \draw (1.1,0.9) node {} -- (-1.1,0.9) [blue] node {};
   \draw (-1.1,0.9) node {} -- (-0.6,-1.3) [red] node {};%
   \draw (-0.6,-1.3) node {} -- (1.4,-0.3) [blue] node {};
   \draw (1.4,-0.3) node {} -- (0,1.5) [blue] node {};
     \put(-15,-45){$\mathbf{(34).} \hspace{0.2cm} 14$}
   \end{tikzpicture} \hspace{1cm}
\begin{tikzpicture}[thick,scale=0.4,shorten >=2pt]
    \draw (0,3) node {} -- (0,1.5) [blue] node {};
    \draw (2.3,1.9) node {} -- (1.1,0.9) [blue] node {};
    \draw (2.9,-0.6) node {} -- (1.4,-0.3) [blue] node {};
    \draw (1.3,-2.7) node {} -- (0.6,-1.3) [blue] node {};
    \draw (-2.3,1.9) node {} -- (-1.1,0.9) [blue] node {};
    \draw (-2.9,-0.6) node {} -- (-1.4,-0.3) [blue] node {};
    \draw (-1.3,-2.7) node {} -- (-0.6,-1.3) [blue] node {};

   \draw (0,3) node {} -- (2.3,1.9) [blue] node {};
   \draw (2.3,1.9) node {} -- (2.9,-0.6) [red] node {};%
   \draw (2.9,-0.6) node {} -- (1.3,-2.7) [blue] node {};
   \draw (1.3,-2.7) node {} -- (-1.3,-2.7) [blue] node {};
   \draw (-2.3,1.9) node {} -- (-2.9,-0.6) [blue] node {};
   \draw (-2.9,-0.6) node {} -- (-1.3,-2.7) [blue] node {};
   \draw (-2.3,1.9) node {} -- (0,3) [blue] node {};

   \draw (0,1.5) node {} -- (-1.4,-0.3) [blue] node {};
   \draw (-1.4,-0.3) node {} -- (0.6,-1.3) [blue] node {};
   \draw (0.6,-1.3) node {} -- (1.1,0.9) [red] node {};%
   \draw (1.1,0.9) node {} -- (-1.1,0.9) [blue] node {};
   \draw (-1.1,0.9) node {} -- (-0.6,-1.3) [blue] node {};
   \draw (-0.6,-1.3) node {} -- (1.4,-0.3) [blue] node {};
   \draw (1.4,-0.3) node {} -- (0,1.5) [red] node {};%
     \put(-15,-45){$\mathbf{(35).} \hspace{0.2cm} 1$}
   \end{tikzpicture} \hspace{1cm}
\begin{tikzpicture}[thick,scale=0.4,shorten >=2pt]
    \draw (0,3) node {} -- (0,1.5) [red] node {};%
    \draw (2.3,1.9) node {} -- (1.1,0.9) [blue] node {};
    \draw (2.9,-0.6) node {} -- (1.4,-0.3) [blue] node {};
    \draw (1.3,-2.7) node {} -- (0.6,-1.3) [blue] node {};
    \draw (-2.3,1.9) node {} -- (-1.1,0.9) [blue] node {};
    \draw (-2.9,-0.6) node {} -- (-1.4,-0.3) [blue] node {};
    \draw (-1.3,-2.7) node {} -- (-0.6,-1.3) [blue] node {};

   \draw (0,3) node {} -- (2.3,1.9) [blue] node {};
   \draw (2.3,1.9) node {} -- (2.9,-0.6) [red] node {};%
   \draw (2.9,-0.6) node {} -- (1.3,-2.7) [blue] node {};
   \draw (1.3,-2.7) node {} -- (-1.3,-2.7) [blue] node {};
   \draw (-2.3,1.9) node {} -- (-2.9,-0.6) [blue] node {};
   \draw (-2.9,-0.6) node {} -- (-1.3,-2.7) [blue] node {};
   \draw (-2.3,1.9) node {} -- (0,3) [blue] node {};

   \draw (0,1.5) node {} -- (-1.4,-0.3) [blue] node {};
   \draw (-1.4,-0.3) node {} -- (0.6,-1.3) [blue] node {};
   \draw (0.6,-1.3) node {} -- (1.1,0.9) [red] node {};%
   \draw (1.1,0.9) node {} -- (-1.1,0.9) [blue] node {};
   \draw (-1.1,0.9) node {} -- (-0.6,-1.3) [blue] node {};
   \draw (-0.6,-1.3) node {} -- (1.4,-0.3) [blue] node {};
   \draw (1.4,-0.3) node {} -- (0,1.5) [blue] node {};
     \put(-15,-45){$\mathbf{(36).} \hspace{0.2cm} 7$}
   \end{tikzpicture}
\end{scriptsize}
\end{center}

\noindent
 {\sc Yousef Bagheri, Ali Reza Moghaddamfar and Farzaneh Ramezani }\\[0.2cm]
{\sc Faculty of Mathematics, K. N. Toosi
University of Technology,
 P. O. Box $16765$--$3381$, Tehran, Iran,}\\[0.1cm]
{\em E-mail addresses}: {\tt moghadam@kntu.ac.ir},  {\tt ramezani@kntu.ac.ir}
\end{document}